\documentclass[a4paper,11pt]{amsart}

\usepackage{fancybox}
\usepackage{amscd}
\usepackage{amsmath}
\usepackage{amssymb}
\usepackage{amsthm}
\usepackage{float}
\usepackage[dvips, pdftex]{graphicx}
\usepackage{tikz}
\usepackage[all, ps, dvips, pdftex]{xy}
\usepackage{color}
\usepackage[hidelinks, hyperfootnotes=false]{hyperref}
\usepackage{array}
\usepackage{amscd}

\DeclareFontFamily{U}{rsfs}{%
\skewchar\font127}
\DeclareFontShape{U}{rsfs}{m}{n}{%
<-6>rsfs5<6-8.5>rsfs7<8.5->rsfs10}{}
\DeclareSymbolFont{rsfs}{U}{rsfs}{m}{n}
\DeclareSymbolFontAlphabet
{\mathrsfs}{rsfs}
\DeclareRobustCommand*\rsfs{%
\@fontswitch\relax\mathrsfs}

\theoremstyle{plain}
\newtheorem{thm}{Theorem}[section]
\newtheorem*{thm*}{Theorem}
\newtheorem{prop}[thm]{Proposition}
\newtheorem{constr}[thm]{Construction}
\newtheorem{lem}[thm]{Lemma}

\newtheorem{defi}[thm]{Definition}
\newtheorem{rmk}[thm]{Remark}

\newtheorem{prop-defi}[thm]{Proposition-Definition}
\newtheorem{thm-defi}[thm]{Theorem-Definition}
\newtheorem{lem-defi}[thm]{Lemma-Definition}

\newtheorem{question}[thm]{Question}
\newtheorem*{question*}{Question}

\newtheorem{setup-def}[thm]{Setup-Definition}

\newdimen\argwidth
\def\db[#1\db]{
 \setbox0=\hbox{$#1$}\argwidth=\wd0
 \setbox0=\hbox{$\left[\box0\right]$}
  \advance\argwidth by -\wd0
 \left[\kern.3\argwidth\box0 \kern.3\argwidth\right]}

\newcommand{\aA}{\mathcal{A}}
\newcommand{\bB}{\mathcal{B}}
\newcommand{\cC}{\mathcal{C}}
\newcommand{\dD}{\mathcal{D}}
\newcommand{\eE}{\mathcal{E}}
\newcommand{\fF}{\mathcal{F}}

\newcommand{\hH}{\mathcal{H}}

\newcommand{\kK}{\mathcal{K}}

\newcommand{\mM}{\mathcal{M}}
\newcommand{\nN}{\mathcal{N}}
\newcommand{\oO}{\mathcal{O}}

\newcommand{\wW}{\mathcal{W}}
\newcommand{\xX}{\mathcal{X}}

\newcommand{\zZ}{\mathcal{Z}}

\newcommand{\Ob}{\mathcal{O}b}
\newcommand{\OB}{\mathop{\rm Ob}\nolimits}
\newcommand{\Hone}{\mathop{H^1}\nolimits}
\newcommand{\bL}{\mathbb{L}}
\newcommand{\bQ}{\mathbb{Q}}

\newcommand{\fm}{\mathfrak{m}}

\newcommand{\fh}{\mathfrak{h}}

\newcommand{\fc}{\mathfrak{c}}
\newcommand{\fn}{\mathfrak{n}}

\renewcommand{\tilde}{\widetilde}

\newcommand{\lr}{\longrightarrow}

\newcommand{\Tor}{\mathop{\rm Tor}\nolimits}

\newcommand{\dR}{\mathbf{R}}

\newcommand{\id}{\textrm{id}}

\newcommand{\rk}{\mathop{\rm rk}\nolimits}

\newcommand{\Ext}{\mathop{\rm Ext}\nolimits}
\newcommand{\Spec}{\mathop{\rm Spec}\nolimits}

\newcommand{\Coh}{\mathop{\rm Coh}\nolimits}

\newcommand{\RHom}{\mathop{\dR\mathrm{Hom}}\nolimits}

\newcommand{\bA}{\mathbb{A}}
\newcommand{\bC}{\mathbb{C}}

\newcommand{\bP}{\mathbb{P}}

\def\lal{_\lambda}
\def\vir{\mathrm{\vir}}
\def\loc{\mathrm{\loc}}

\def\labc{_{\alpha \beta \gamma}}

\def\lra{\longrightarrow}
\def\ti{\tilde}

\def\lalp{_\alpha}
\def\sub{\subset}

\def\virt{^{\mathrm{vir}}}

\def\beq{\begin{equation}}
\def\eeq{\end{equation}}

\def\lalp{_\alpha }
\def\lab{_{\alpha\beta}}
\def\lbet{_\beta}

\def\rred{{\mathrm{red}}}
\def\loc{{\mathrm{loc}}}

\makeatletter
 
  \@addtoreset{equation}{section}
\makeatother

\setcounter{tocdepth}{1}
\makeatletter
\def\@tocline#1#2#3#4#5#6#7{\relax
  \ifnum #1>\c@tocdepth 
  \else
    \par \addpenalty\@secpenalty\addvspace{#2}%
    \begingroup \hyphenpenalty\@M
    \@ifempty{#4}{%
      \@tempdima\csname r@tocindent\number#1\endcsname\relax
    }{%
      \@tempdima#4\relax
    }%
    \parindent\z@ \leftskip#3\relax \advance\leftskip\@tempdima\relax
    \rightskip\@pnumwidth plus4em \parfillskip-\@pnumwidth
    #5\leavevmode\hskip-\@tempdima
      \ifcase #1
       \or\or \hskip 1em \or \hskip 2em \else \hskip 3em \fi%
      #6\nobreak\relax
    \hfill\hbox to\@pnumwidth{\@tocpagenum{#7}}\par
    \nobreak
    \endgroup
  \fi}
\makeatother

\title{K-theoretic Generalized Donaldson-Thomas Invariants}

\author{Young-Hoon Kiem}
\address{Department of Mathematical Sciences, Seoul National University, Seoul 08826, Korea}
\email{kiem@snu.ac.kr}

\author{Michail Savvas}
\address{Department of Mathematics, University of California, San Diego, La Jolla, CA 92093, USA}
\email{msavvas@ucsd.edu}

\thanks{YHK was partially supported by Samsung Science and Technology Foundation grant SSTF-BA1601-01. Part of this work was completed while MS was visiting the IHES, which he would like to thank for the excellent environment and working conditions.}

\begin{document}

\maketitle

\begin{abstract} 
We introduce the notion of almost perfect obstruction theory on a Deligne-Mumford stack and show that stacks with almost perfect obstruction theories have virtual structure sheaves which are deformation invariant.
The main components in the construction are an induced embedding of the coarse moduli sheaf of the intrinsic normal cone into the associated obstruction sheaf stack and the construction of a $K$-theoretic Gysin map for sheaf stacks.

We show that many stacks of interest admit almost perfect obstruction theories. As a result, we are able to define virtual structure sheaves and $K$-theoretic classical and generalized Donaldson-Thomas invariants of sheaves and complexes on Calabi-Yau threefolds.
\end{abstract}

\tableofcontents

\section{Introduction}

In enumerative geometry, 
one is concerned with finding the number of geometric objects satisfying a set of given conditions. Let $X$ be a scheme or more generally a Deligne-Mumford moduli stack which parametrizes the objects of interest. When $X$ is smooth and compact, it admits a fundamental cycle $[X] \in A_{\dim X}(X)$. One then obtains enumerative invariants by integrating appropriate cohomology classes against $[X]$, which are also invariant under suitable deformation of the counting problem.

However, in practice, $X$ is almost always not of the expected dimension, very singular and does not behave well under deformation. To deal with this problem, Li-Tian \cite{LiTian} and  Behrend-Fantechi \cite{BehFan} constructed the virtual fundamental cycle $[X]\virt \in A_{*}(X)$, which is of the expected dimension. This has been used to define many important enumerative invariants such as Gromov-Witten, Donaldson-Thomas \cite{Thomas} and Pandharipande-Thomas invariants \cite{PT1}.

Every Deligne-Mumford stack $X$ has an intrinsic normal cone $\cC_X$ which locally for an \'{e}tale morphism $U \to X$ and any embedding $U \to V$ into a smooth scheme $V$ is the quotient stack $[C_{U/V} / T_V|_U]$ of the normal cone of $U$ in $V$ by the tangent bundle $T_V$ of $V$ restricted to $U$ (cf. \cite{BehFan}). A perfect obstruction theory $\phi \colon E \to \bL_X^{\geq -1}$ (cf. Definition \ref{Perf obs th}) induces a closed embedding of $\cC_X$ into the vector bundle stack $\eE = h^1 / h^0 (E^\vee)$. One may then intersect $\cC_X$ with the zero section $0_{\eE}$ by using the Gysin map $0_{\eE}^!$ of a vector bundle stack (cf. \cite{Kresch}). The result is the virtual fundamental class  
\beq\label{y0}[X]\virt = 0_{\eE}^![\cC_X] \in A_* (X)\eeq
and integrating cohomology classes against $[X]\virt$ defines virtual invariants.

When $E$ admits a global presentation by vector bundles $[E^{-1} \to E^0]$, then we get a cone $C_1 = \cC_X \times_{\eE} E_1 \sub E_1$, where $E_1 = (E^{-1})^\vee$. Using the $K$-theoretic Gysin map $0_{E_1}^!$, one also obtains the virtual structure sheaf
\beq\label{y1} [\oO_X\virt] = 0_{E_1}^![\oO_{C_1}] \in K_0(X)\eeq
which can be viewed as a refinement of the virtual fundamental class $[X]\virt$. The $K$-theoretic virtual invariants are defined as the holomorphic Euler characteristic 
$\chi(X, [\oO_X\virt]\otimes \beta)$ for  $\beta\in K^0(X).$ 
\\

Recently there has been increased interest in moduli problems in which it is not clear how to construct a perfect obstruction theory, the most notable examples being moduli of perfect complexes \cite{Inaba, Lieblich} and generalized invariants where semistability and stability do not coincide \cite{KLS}. At the same time, motivated by applications to physics and geometric representation theory, it is desirable to refine the enumerative invariants beyond the level of intersection theory of cycles to $K$-theory. See \cite{Okou1, Okou2} for instance. So the following questions seem timely and interesting. 

\begin{question*}
Is there a virtual structure sheaf $[\oO_X\virt]\in K_0(X)$ when $X$ is a (not necessarily fine) component of the Inaba-Lieblich moduli space of simple universally gluable perfect complexes on a Calabi-Yau 3-fold or the partial desingularization of a moduli space of semistable objects? If yes, is the K-theoretic generalized Donaldson-Thomas invariant 
$$\chi(X,[\oO_X\virt]\otimes \beta), \quad \beta\in K^0(X)$$
defined by the virtual structure sheaf $[\oO_X\virt]$, invariant under deformation? 
\end{question*}

The purpose of this paper is to provide positive answers to the above questions 
and hence to generalize the K-theoretic Donaldson-Thomas invariants to derived category objects or partial desingularizations. 
The usual perfect obstruction theory in \cite{BehFan} is too strong a requirement since it doesn't seem to exist for many moduli spaces. On the other hand, the semi-perfect obstruction theory in \cite{LiTian, LiChang} (cf. Definition \ref{semi-perfect obs th}) seems too weak to 
guarantee a K-theory class for the obstruction cone. 
The novel technique we introduce here is the notion of  almost perfect obstruction theory (cf. Definition~\ref{almost perfect obs th}) which requires  less than the usual perfect obstruction theory but slightly more than the semi-perfect obstruction theory (cf. Proposition \ref{y4}, \eqref{y5}). 
We will see that all the moduli spaces for generalized Donaldson-Thomas invariants of derived category objects or their partial desingularizations admit almost perfect obstruction theories and that a Deligne-Mumford stack equipped with an almost perfect obstruction theory has a virtual structure sheaf by a recipe similar to \eqref{y1}.  \\ 

Roughly speaking, an almost perfect obstruction $\phi$ on a Deligne-Mumford stack $X$ consists of perfect obstruction theories $$\{\phi_\alpha:E_\alpha\lr  \bL_{U_\alpha}^{\geq -1}\}_{\alpha\in A}$$ for an \'etale cover $\{U_\alpha\to X\}_{\alpha\in A}$, 
whose obstruction sheaves $\{h^1(E_\alpha^\vee)\}$ glue to a coherent sheaf $Ob_\phi$ on $X$, such that for each pair $\alpha,\beta\in A$, we have isomorphisms between $E_\alpha|_{U_{\alpha\beta}}$ and $E_\beta|_{U_{\alpha\beta}}$ over $ \bL_{U_{\alpha\beta}}^{\geq -1}$  \'etale locally where $U_{\alpha\beta}=U_\alpha\times_XU_\beta$. 
We will see that an almost perfect obstruction theory $\phi$ on $X$ induces a closed embedding of the coarse moduli sheaf $\fc_X$ of the intrinsic normal cone into the obstruction sheaf $\Ob_\phi$. We also construct a $K$-theoretic Gysin map $0_{\Ob_\phi}^!$ for the sheaf stack $\Ob_\phi$. 
The virtual structure sheaf $[\oO_X\virt]$ is now defined as the result of applying $0_{\Ob_\phi}^!$ to 
the structure sheaf $[\oO_{\fc_X}]$ of $\fc_X$. 

Even in the case where $X$ admits a perfect obstruction theory, our method improves the construction of the virtual structure sheaf $[\oO_X\virt]$ in \cite{BehFan, yplee} in that we no longer need to assume that the perfect obstruction theory $E$ admits a resolution by a globally defined two-term complex of locally free sheaves.

We summarize the main results of this paper as follows.

\begin{thm*}
Let $X \to S$ be a morphism with an almost perfect obstruction theory (cf. Definition~\ref{almost perfect obs th}), where $X$ is a Deligne-Mumford stack of finite presentation and $S$ a smooth quasi-projective scheme. Then the coarse intrinsic normal cone stack $\fc_X$ embeds into the obstruction sheaf $\Ob_\phi$ (cf. Theorem~\ref{thm 3.4}) and using the $K$-theoretic Gysin map $0_{\Ob_\phi}^!$ (cf. Definition~\ref{Gysin morphism}) we may define the virtual structure sheaf of $X$ as
$$[\oO_X\virt] = 0_{\Ob_\phi}^! [\oO_{\fc_X}] \in K_0(X) $$
which is deformation invariant (cf. Theorem~\ref{deformation invariance}). 
\end{thm*}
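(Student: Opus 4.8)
The statement packages three constructions, and I would establish them in the order listed, descending from the étale-local data of an almost perfect obstruction theory. For the embedding $\fc_X \hookrightarrow \Ob_\phi$ (Theorem~\ref{thm 3.4}), the plan is to start from the local perfect obstruction theories $\phi_\alpha \colon E_\alpha \to \bL_{U_\alpha}^{\geq -1}$. Each $\phi_\alpha$ produces, by the Behrend--Fantechi construction, a closed embedding of the intrinsic normal cone $\cC_{U_\alpha}$ into the vector bundle stack $\eE_\alpha = h^1/h^0(E_\alpha^\vee)$. I would then pass to coarse moduli sheaves: the coarse moduli sheaf of $\eE_\alpha$ is the coherent sheaf $h^1(E_\alpha^\vee)$ and that of $\cC_{U_\alpha}$ is $\fc_{U_\alpha}$, yielding a closed embedding $\fc_{U_\alpha} \hookrightarrow h^1(E_\alpha^\vee)$. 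The substance is the gluing: over $U_{\alpha\beta}$ an étale-local isomorphism $E_\alpha \simeq E_\beta$ over $\bL_{U_{\alpha\beta}}^{\geq -1}$ is not canonical, but its effect on $h^1$ of the dual complexes is, since the indeterminacy lives in homotopies which vanish after applying $h^1$. Hence the induced isomorphisms $h^1(E_\alpha^\vee) \simeq h^1(E_\beta^\vee)$ descend, coincide with the gluing data defining $\Ob_\phi$, and are compatible with the cone embeddings, so étale descent gives the global closed embedding $\fc_X \hookrightarrow \Ob_\phi$.

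For the $K$-theoretic Gysin map $0_{\Ob_\phi}^!$ (Definition~\ref{Gysin morphism}), the plan is to define it locally and verify well-definedness. After refining the cover, present $E_\alpha$ by locally free sheaves $[E_\alpha^{-1}\to E_\alpha^0]$, so $E_{1,\alpha} = (E_\alpha^{-1})^\vee$ is a vector bundle on $U_\alpha$ admitting a surjection $E_{1,\alpha} \twoheadrightarrow \Ob_\phi|_{U_\alpha}$. Pulling the embedded cone back along this surjection gives a closed subcone $C_{1,\alpha}\subset E_{1,\alpha}$, and I would set $0_{\Ob_\phi}^![\oO_{\fc_X}]$ on $U_\alpha$ equal to $0_{E_{1,\alpha}}^![\oO_{C_{1,\alpha}}]\in K_0(U_\alpha)$ via the ordinary zero-section Gysin map for a vector bundle in $K$-theory. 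Two checks are needed: independence of the presentation, obtained by comparing two presentations through a common refinement and using that $0_V^!$ is inverse to flat pullback along $V\to U$ for a vector bundle $V$; and agreement on overlaps $U_{\alpha\beta}$, which reduces to the same functoriality once the $\Ob_\phi|_{U_\alpha}$ are identified by the first part. Étale descent for $K_0$ of a Deligne--Mumford stack then assembles the local classes into $[\oO_X\virt]=0_{\Ob_\phi}^![\oO_{\fc_X}]\in K_0(X)$.

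For deformation invariance (Theorem~\ref{deformation invariance}), I would run the usual specialization argument in $K$-theory. The geometric input is that an almost perfect obstruction theory relative to $S$ restricts to one on each fibre $X_s = X\times_S\{s\}$, compatibly with the coarse intrinsic normal cone (the correction coming from $T_S|_s$ being trivial since $S$ is smooth). Given two closed points $s_0,s_1$ joined by a smooth curve $T\to S$, I would form the deformation to the normal cone of the fibre inclusion $X_{s_i}\hookrightarrow X_T$ and use a base-change compatibility of $0_{\Ob_\phi}^!$ with the refined Gysin morphisms $\iota_{s_i}^!$ — a functoriality that should be extracted from the previous step — to conclude $\iota_{s_0}^![\oO_X\virt_{X/S}] = \iota_{s_1}^![\oO_X\virt_{X/S}]$, each being identified with the absolute class $[\oO_{X_{s_i}}\virt]$; numerical invariance of $\chi(X_s,[\oO_{X_s}\virt]\otimes\beta)$ follows. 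This uses a localization/rational-equivalence statement for $K_0$ to kill the difference in the special fibre.

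The hard part will be the second step. Unlike vector bundle stacks, the sheaf stacks $\Ob_\phi$ are genuinely singular, so Kresch's intersection theory is not directly available, and essentially all the content is in showing that the local recipe $0_{E_{1,\alpha}}^![\oO_{C_{1,\alpha}}]$ is independent of the chosen two-term presentation and glues étale-locally — that is, in proving enough functoriality of $K$-theoretic Gysin maps for quotients of vector bundles by images of other vector bundles. Once that is in place, the embedding of Step~1 and the specialization of Step~3 should follow formally, in parallel with the cycle-level theory for semi-perfect obstruction theories.
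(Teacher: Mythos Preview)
Your outline for Step~1 and Step~3 is broadly in line with the paper, but Step~2 contains a genuine gap, and it is precisely the step you flagged as the hard part.

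The paper does \emph{not} construct $0_{\Ob_\phi}^!$ by producing local classes $0_{E_{1,\alpha}}^![\oO_{C_{1,\alpha}}]\in K_0(U_\alpha)$ and then invoking \'etale descent for $K_0$. The reason is that $K_0$ is not a sheaf in the \'etale topology: there is no mechanism by which a family of classes in $K_0(U_\alpha)$ that agree in $K_0(U_{\alpha\beta})$ must come from a class in $K_0(X)$. Your plan relies on exactly this, so as written it does not produce a well-defined element of $K_0(X)$.

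What the paper does instead is to glue one categorical level higher, where descent \emph{is} available. It sets up a notion of coherent sheaf on the sheaf stack $\Ob_\phi$ via local charts $(U,\rho,E,r_E)$ with $E\twoheadrightarrow\Ob_\phi|_U$, and for such a sheaf $\aA$ defines local Koszul homology sheaves $\hH^i_Q(\aA)=\Tor_i^{\oO_E}(\oO_U,\aA_Q)$ on each chart. The substantive work (Proposition~2.10, Lemma~2.13, Construction~2.14) is to show that these local coherent sheaves glue canonically along comparison isomorphisms coming from common roofs of local charts, yielding global coherent sheaves $\hH^i_K(\aA)\in\Coh(X)$. Only then does one take classes: $0_{\Ob_\phi}^![\aA]=\sum_i(-1)^i[\hH^i_K(\aA)]\in K_0(X)$. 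The point is that $\Coh$ satisfies \'etale descent while $K_0$ does not, so the gluing must happen before passing to $K$-theory. Your approach would need to be rewritten along these lines.

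A smaller remark on Step~1: your justification that the effect of the local isomorphism $E_\alpha|_{V_\lambda}\simeq E_\beta|_{V_\lambda}$ on $h^1$ of the duals is canonical ``since the indeterminacy lives in homotopies'' is not correct in general---two isomorphisms of perfect complexes over $\bL$ need not induce the same map on $h^1(E^\vee)$. The paper does not need this: the compatibility $h^1(\eta_{\alpha\beta\lambda}^\vee)=\psi_{\alpha\beta}^{-1}|_{V_\lambda}$ is built into the \emph{definition} of an almost perfect obstruction theory, and that is what makes diagram~\eqref{loc 5.2} commute.
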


Almost perfect obstruction theories are flexible enough to appear in many moduli problems of interest and in particular moduli of sheaves and complexes on Calabi-Yau threefolds. They can thus be used to construct virtual structure sheaves and associated $K$-theoretic generalized Donaldson-Thomas invariants. 
In this paper, we discuss the following applications to Donaldson-Thomas theory:
\begin{enumerate}
    \item Gieseker or slope semistable sheaves.
    \item Simple perfect complexes. 
    \item PT-semistable complexes.
    \item Bridgeland semistable complexes.
\end{enumerate}

\subsection*{Layout of the paper} In \S 2, we develop a formalism for $K$-theory of coherent sheaves for sheaf stacks over Deligne-Mumford stacks and define a $K$-theoretic Gysin map. In \S 3, we introduce almost perfect obstruction theories and show that they induce an embedding of the coarse intrinsic normal cone into the obstruction sheaf stack. In \S 4, we combine the results of \S 2 and \S 3 to construct virtual structure sheaves for stacks with almost perfect obstruction theories and prove their deformation invariance. \S 5 focuses on applications, including virtual structure sheaves for derived and d-critical stacks and $K$-theoretic classical and generalized Donaldson-Thomas invariants of sheaves and complexes. Finally, in the Appendix, we prove that an almost perfect obstruction theory is a semi-perfect obstruction theory while a perfect obstruction theory is an almost perfect obstruction theory. 

\subsection*{Notation and conventions} Everything in this paper is over the field $\bC$ of complex numbers. All stacks are of finite type. Deligne-Mumford stacks are separated. $X \to S$ will typically denote a morphism of finite type, where $X$ is a Deligne-Mumford stack and $S$ a smooth curve or more generally a smooth Artin stack, locally of finite type and of pure dimension. When we write $U \to S$ we will typically refer to an \'{e}tale cover of $X \to S$ equipped with a perfect obstruction theory, which will often be part of the data of a semi-perfect or almost perfect obstruction theory on $X \to S$.

If $E$ is a locally free sheaf on a Deligne-Mumford stack $X$, we will use the term ``vector bundle" to refer to its total space. If $\fF$ is a coherent sheaf on a Deligne-Mumford stack $X$, we will use the same letter to refer to the associated sheaf stack.

DM stands for Deligne-Mumford and DT for Donaldson-Thomas.

\section{$K$-Theoretic Gysin Maps on Sheaf Stacks} \label{sec2}

Let $X$ be a Deligne-Mumford stack and $\fF$ a coherent sheaf on $X$, with $0_\fF$ being its zero section. 

\begin{defi} \emph{(Sheaf stack)} The \emph{sheaf stack} associated to $\fF$ is the stack that to every morphism $\rho \colon W \to X$ from a scheme $W$ associates the set $\Gamma(W, \rho^\ast \fF)$. 
\end{defi}
 
By abuse of notation we denote by $\fF$ the sheaf stack associated to a coherent sheaf $\fF$ on $X$.  

In this section, we develop a formalism for $K$-theory of coherent sheaves on $\fF$ and define an associated Gysin map 
$$0_\fF^! \colon K_0(\fF) \lr K_0(X).$$

These generalize the usual definitions when $\fF$ is a vector bundle. The main point is that $\fF$ is in general not algebraic, so we may not work with smooth covers by schemes. However, this role will be played by morphisms of the form $r_E \colon E \to \fF|_U$, where $U$ is a scheme with an \'{e}tale map $U \to X$, $E$ is a locally free sheaf on $U$ and $r_E$ is a surjection. 

\subsection{Local charts for $\fF$} The preceding discussion motivates the following definition.

\begin{defi} \emph{(Local chart)} \label{local chart}
Let $U$ be a scheme, $\rho \colon U \to X$ an \'{e}tale morphism, $E$ be a locally free sheaf on $U$ and $r_E \colon E \to \rho^* \fF=\fF|_U$ be a surjection. We say that the data $(U, \rho, E, r_E)$ give a \emph{local chart} on $\fF$ with base $\rho \colon U \to X$ and vector bundle $E$ and denote the composition $E \to \fF|_U \to \fF$ by $\bar{r}_E$. If $U$ is affine and $E$ is free, then we refer to the data $Q = (U, \rho, E, r_E)$ as an affine local chart.
\end{defi}

We also define morphisms between local charts.

\begin{defi} \emph{(Morphism between local charts)}
Let $Q=(U, \rho, E, r_E)$ and $Q'=(U', \rho', E', r_{E'})$ be two local charts for $\fF$. A \emph{morphism} $\gamma \colon Q \to Q'$ is the data $(\rho_\gamma, r_\gamma)$ of an \'{e}tale morphism $\rho_\gamma \colon U \to U'$ and a surjection $r_\gamma \colon E \to \rho_\gamma^* E'$ such that the triangles
\begin{align*}
\xymatrix{
U \ar[r]^-{\rho_\gamma} \ar[dr]_-{\rho} & U' \ar[d]^-{\rho'} \\
& X
} \ \textrm{} \ \xymatrix{
E \ar[r]^-{r_\gamma} \ar[dr]_-{r_E} & \rho_\gamma^* E' \ar[d]^-{\rho_\gamma^* r_{E'}} \\
& \fF|_U
}
\end{align*}
are commutative.

We say that $Q$ is a restriction of $Q'$ and write $Q = Q'|_U$ if $E = \rho_\gamma^* E'$ and $r_\gamma$ is the identity morphism.
\end{defi}

\subsection{$K_0(\fF)$ for a sheaf stack $\fF$} In what follows, $X$ is a Deligne-Mumford stack and $\fF$ is a sheaf stack over $X$ associated to a coherent sheaf $\fF$. 

By a coherent sheaf $\aA$ on $\fF$ we mean an assignment to every local chart $Q = (U, \rho, E, r_E)$ of a coherent sheaf $\aA_Q$ on the scheme $E$ (in the \'{e}tale topology) such that for every morphism $\gamma \colon Q \to Q'$ between local charts we have an isomorphism 
\beq\label{y2} r_\gamma^* \left( \rho_\gamma^*\aA_{Q'} \right) \lr \aA_Q\eeq
that satisfies the usual compatibilities for composition of morphisms. Note that we abusively write $\rho_\gamma^* \aA_{Q'}$ for the pullback of $\aA_{Q'}$ to $\rho_\gamma^* E'$ via the morphism of bundles $\rho_\gamma^* E' \to E'$ induced by $\rho_\gamma$.

\begin{rmk} \label{structure sheaf of closed substack}
Any closed substack $Z \sub \fF$ has a structure sheaf $\oO_Z$ which assigns to every local chart $Q = (U, \rho, E, r_E)$ the sheaf $\oO_{Z \times_\fF E}$, where $E$ maps to $\fF$ via $\bar{r}_E$. It is easy to see that $\oO_Z$ is a coherent sheaf on $\fF$.
\end{rmk}

A homomorphism $f:\aA\to \bB$ of coherent sheaves on $\fF$ refers to a homomorphism
$f_Q:\aA_Q\to \bB_Q$ of coherent sheaves on $E$ for each local chart $Q=(U,\rho,E,r_E)$ such that for every morphism $\gamma:Q\to Q'$ of local charts,
the diagram 
$$\xymatrix{
r_\gamma^* \left( \rho_\gamma^*\aA_{Q'} \right) \ar[r]\ar[d]_{f_{Q'}}& \aA_Q\ar[d]^{f_Q}\\
r_\gamma^* \left( \rho_\gamma^*\bB_{Q'} \right) \ar[r] & \bB_Q
}$$
is commutative where the horizontal arrows are \eqref{y2}. We say that a homomorphism $f:\aA\to \bB$ is an isomorphism if $f_Q$ is an isomorphism for each local chart $Q$.

We can define the notion of an exact sequence.
\begin{defi} \emph{(Short exact sequence)} \label{ses}
Let $\aA, \bB, \cC$ be coherent sheaves on $\fF$. A sequence 
\begin{align*}
    0 \lr \aA \lr \bB \lr \cC \lr 0
\end{align*}
of homomorphisms of coherent sheaves on $\fF$  
is \emph{exact} if for every local chart $Q=(U, \rho, E, r_E)$ on $\fF$ the sequence
\begin{align*}
    0 \lr \aA_Q \lr \bB_Q \lr \cC_Q \lr 0
\end{align*}
is an exact sequence of coherent sheaves on the scheme $E$.
\end{defi}

We may now define the $K$-group of coherent sheaves on $\fF$ in the usual way.

\begin{defi} 
The \emph{$K$-group} of coherent sheaves on $\fF$ is the group generated by the isomorphism classes $[\aA]$ of coherent sheaves $\aA$ on $\fF$, with relations generated by $[\bB] = [\aA] + [\cC]$ for every short exact sequence $$0 \lr \aA \lr \bB \lr \cC \lr 0.$$
\end{defi}

\begin{rmk}
If $\fF$ is locally free, then $\fF$ is an algebraic stack and the above definitions recover the usual notions of short exact sequences and $K_0(\fF)$. This is because the morphism $\bar{r}$ associated to a local chart is smooth, as $r$ is a surjection of locally free sheaves and hence a smooth morphism on their total spaces, and therefore a collection of local charts that covers $\fF$ will give a smooth atlas for $\fF$.
\end{rmk}

\subsection{Koszul homology} Let $(U, \rho, E, r_E)$ be a local chart for $\fF$ and denote the vector bundle projection map $E \to U$ by $\pi_E$. Then the tautological section of the pullback $\pi_E^* E$ induces an associated Koszul complex $\wedge^\bullet \pi_E^* E^\vee$ that resolves the structure sheaf $\oO_U$ of the zero section of $\pi_E$.

\begin{defi}
We define $K(E)$ to be the above Koszul complex $\wedge^\bullet \pi_E^* E^\vee$.
\end{defi}

\begin{defi}
For any $Q = (U, \rho, E, r_E)$ and coherent sheaf $\aA$ on $\fF$, the \emph{$i$-th Koszul homology sheaf} $\hH_Q^i(\aA)$ of $\aA$ with respect to $Q$ is defined as the homology of the complex $\wedge^\bullet \pi_E^* E^\vee \otimes_{\oO_E} \aA_Q$ in degree $-i$.
\end{defi}

\begin{lem}
$\hH_Q^i(\aA)$ is an $\oO_U$-module isomorphic to $\Tor^{\oO_E}_i(\oO_U, \aA_Q)$.
\end{lem}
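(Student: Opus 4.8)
The plan is to unwind the definitions and identify $\hH_Q^i(\aA)$ with a Tor computed on the total space $E$ of the bundle, then push down to $U$. First I would note that by construction $K(E) = \wedge^\bullet \pi_E^* E^\vee$ is the Koszul complex attached to the tautological section of $\pi_E^* E$, and that this section vanishes exactly along the zero section $0_E \colon U \hookrightarrow E$; since $\pi_E^* E$ is locally free and the tautological section is a regular section (its zero locus $U$ has the expected codimension $\rk E$ in $E$, because $E \to U$ is a vector bundle), the Koszul complex $K(E)$ is a finite locally free resolution of $(0_E)_* \oO_U = \oO_U$ as an $\oO_E$-module. This is the standard fact that for a regular section of a vector bundle the Koszul complex resolves the structure sheaf of its zero scheme.

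Given this, by definition $\hH_Q^i(\aA)$ is the degree $-i$ homology of $K(E) \otimes_{\oO_E} \aA_Q$, which is precisely $\mathcal{T}\! or^{\oO_E}_i(\oO_U, \aA_Q)$ computed as a sheaf on $E$, using the locally free resolution $K(E) \to \oO_U$. So I would first record the isomorphism of $\oO_E$-modules
\[
\hH_Q^i(\aA) \;\cong\; \TOR^{\oO_E}_i(\oO_U, \aA_Q).
\]
The only remaining point is that both sides are naturally $\oO_U$-modules and the isomorphism is $\oO_U$-linear: each term $\wedge^j \pi_E^* E^\vee \otimes_{\oO_E} \aA_Q$ is supported (as far as the homology is concerned) scheme-theoretically on the zero section, because the complex is exact off $U$ — indeed off $0_E$ the tautological section is nowhere zero, so $K(E)$ restricted there is exact, hence the homology sheaves are supported on $0_E \cong U$. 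A sheaf on $E$ supported on the closed subscheme $U$ and annihilated by the ideal of $U$ (here annihilated because $\oO_U$ is, and tensoring preserves this on homology) is canonically a sheaf on $U$; concretely one takes $(0_E)^{-1}$ or equivalently $(\pi_E)_*$ of the homology sheaf, these agreeing since the support is $U$. Under this identification the $\oO_E$-module structure factors through $\oO_E \to \oO_U$, giving the asserted $\oO_U$-module structure.

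I expect the only mild subtlety — not really an obstacle — to be making precise that the homology is genuinely an $\oO_U$-module rather than merely an $\oO_E$-module set-theoretically supported on $U$; this is handled by the observation above that $K(E)$ is exact on the complement of the zero section, so one gets annihilation by the ideal sheaf $\iI_{U/E}$ on the nose. Everything else is the textbook identification of Koszul homology of a regular section with $\TOR$ against the structure sheaf of the zero locus. One should also remark that the statement is local on $U$ in the étale topology, which is harmless since $\TOR$ and Koszul homology both commute with étale (indeed flat) base change on $U$, so it suffices to treat the case where $E$ is free, where $K(E)$ is the usual Koszul complex on a regular sequence of coordinates and the claim is classical.
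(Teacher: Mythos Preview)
Your proof is correct and takes the same approach as the paper: since $K(E)$ is a locally free resolution of $\oO_U$ (the tautological section being regular), tensoring with $\aA_Q$ computes $\Tor^{\oO_E}_i(\oO_U, \aA_Q)$, and the $\oO_U$-module structure is the standard property of Koszul homology. One minor correction: exactness of $K(E)$ away from the zero section yields only \emph{set-theoretic} support on $U$, not annihilation by $\iI_{U/E}$; the annihilation comes instead from your earlier (correct) remark that $I$ kills $\oO_U$ and hence kills $\Tor(\oO_U,-)$ by functoriality --- equivalently, each local generator of $\iI_{U/E}$ acts nullhomotopically on the Koszul complex.
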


\begin{proof} 
It is a standard property of Koszul homology that $\hH_Q^i(\aA)$ is an $\oO_U$-module. Since $\wedge^\bullet \pi_E^* E^\vee$ is a resolution of $\oO_U$, $\hH_Q^i(\aA)$ is isomorphic to the homology of $\oO_U \otimes_{\oO_E}^L \aA_Q$ in degree $-i$ which is $\Tor_{\oO_E}^i(\oO_U, \aA_Q)$ by definition. 
\end{proof} 

The next proposition shows that a morphism between local charts induces a morphism on the associated Koszul homology sheaves.

\begin{prop} \label{morphism of local charts}
Let $\gamma \colon Q \to Q'$ be a morphism between two local charts. Then for any coherent sheaf $\aA$ on $\fF$ there exists an induced isomorphism $h^i(\gamma) \colon \rho_\gamma^* \hH_{Q'}^i(\aA) \to \hH_Q^i(\aA) $ of $\oO_U$-modules. If $\gamma':Q'\to Q''$ is another morphism of local charts, we have
\beq\label{y3} h^i(\gamma'\circ \gamma)=h^i(\gamma)\circ h^i(\gamma').\eeq
\end{prop}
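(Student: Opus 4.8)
The plan is to unwind the definition of a coherent sheaf $\aA$ on $\fF$ and observe that the isomorphism \eqref{y2} attached to $\gamma$ already gives us the raw material we need. Concretely, recall that a morphism of local charts $\gamma = (\rho_\gamma, r_\gamma) \colon Q \to Q'$ comes with a surjection $r_\gamma \colon E \to \rho_\gamma^* E'$ over $\rho_\gamma \colon U \to U'$, and hence a commutative square of bundle maps relating $E$, $\rho_\gamma^* E'$ and $E'$. Pulling back the tautological sections along these maps shows that the Koszul complex $K(E) = \wedge^\bullet \pi_E^* E^\vee$ receives a map from the pullback of $K(E')$: more precisely, $r_\gamma$ being a surjection of locally free sheaves on $U$ presents $K(E)$, up to a shift and twist, as the Koszul complex of the subbundle $\Ker r_\gamma$ tensored with the pullback of $K(\rho_\gamma^* E')$, which is $\rho_\gamma^*K(E')$. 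The first step is therefore to make this last assertion precise, using the standard multiplicativity of Koszul complexes under direct sums of locally free sheaves.

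Second, I would combine this with the $\Tor$-description of Koszul homology from the lemma just proved. Since $r_\gamma$ is a surjection of vector bundles, the induced map on total spaces $E \to \rho_\gamma^* E'$ is a smooth (in fact affine-bundle) morphism, so the zero section $0_E \colon U \hookrightarrow E$ factors through the base change $U \times_{U'} E' = \rho_\gamma^* E' \hookleftarrow$ of $0_{E'}$, and the relevant $\Tor$ groups are computed against a regular sequence. Flat (smooth) base change along $\rho_\gamma$ and the fact that $E \to \rho_\gamma^* E'$ is flat then give canonical identifications
$$\hH_Q^i(\aA) \cong \Tor^{\oO_E}_i(\oO_U, \aA_Q) \cong \Tor^{\oO_E}_i\bigl(\oO_U, r_\gamma^*\rho_\gamma^*\aA_{Q'}\bigr) \cong \rho_\gamma^* \Tor^{\oO_{E'}}_i(\oO_{U'}, \aA_{Q'}) \cong \rho_\gamma^* \hH_{Q'}^i(\aA),$$
where the second isomorphism is induced by \eqref{y2}. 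Reading this chain backwards defines $h^i(\gamma)$, and each link is an isomorphism of $\oO_U$-modules, so $h^i(\gamma)$ is too. I would phrase the argument at the level of derived tensor products, i.e. using $\oO_U \dotimes_{\oO_E} \aA_Q \cong \rho_\gamma^*(\oO_{U'} \dotimes_{\oO_{E'}} \aA_{Q'})$ as an isomorphism in the derived category of $\oO_U$-modules, and then take homology; this keeps the bookkeeping clean and makes functoriality transparent.

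Third, for the cocycle relation \eqref{y3}, the point is that the chain of isomorphisms above is built entirely from (a) smooth base change isomorphisms, (b) the functoriality of derived pullback $\rho_\gamma^*$, and (c) the compatibility isomorphisms \eqref{y2}, which by hypothesis already satisfy the cocycle condition for composition of chart morphisms. Given $\gamma \colon Q \to Q'$ and $\gamma' \colon Q' \to Q''$, one has $\rho_{\gamma' \circ \gamma} = \rho_{\gamma'} \circ \rho_\gamma$ and $r_{\gamma' \circ \gamma} = r_\gamma \circ \rho_\gamma^* r_{\gamma'}$, and the associativity/transitivity of smooth base change together with the compatibility built into \eqref{y2} forces the composite $h^i(\gamma) \circ h^i(\gamma')$ (after the appropriate $\rho_\gamma^*$ twist) to equal $h^i(\gamma' \circ \gamma)$. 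This is a diagram chase through the natural transformations above; I would present it as a pasting of commutative squares rather than a term-by-term verification.

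The main obstacle is the second step: showing that $\oO_U \dotimes_{\oO_E} \aA_Q$ agrees with $\rho_\gamma^*(\oO_{U'} \dotimes_{\oO_{E'}} \aA_{Q'})$ canonically. This requires knowing both that $\aA_Q \cong r_\gamma^*\rho_\gamma^*\aA_{Q'}$ (which is \eqref{y2}) and that the derived restriction $\oO_U \dotimes_{\oO_E} (-)$ commutes appropriately with the flat pullback along $E \to \rho_\gamma^* E'$ and the smooth base change along $\rho_\gamma$. The cleanest way is to factor $E \to \rho_\gamma^* E' \to E'$ and treat the two maps separately: for $\rho_\gamma^*E' \to E'$ use flat base change along the étale (hence flat) morphism $\rho_\gamma$, and for $E \to \rho_\gamma^* E'$ use that it is a vector-bundle surjection, so $\oO_U$ pulls back to $\oO_U$ and the structure sheaf of the zero section is preserved under the flat pullback, giving $\oO_U \dotimes_{\oO_E} r_\gamma^*(-) \cong \oO_U \dotimes_{\oO_{\rho_\gamma^*E'}} (-)$. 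Once these two reductions are in place the isomorphism is formal, and its naturality in $\aA$ and compatibility with composition follow from the corresponding properties of flat base change.
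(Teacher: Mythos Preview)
Your proposal is correct and follows essentially the same route as the paper. Both arguments hinge on the factorization $E \xrightarrow{r_\gamma} \rho_\gamma^* E' \to E'$ with both maps flat, the identification $\aA_Q \cong r_\gamma^*\rho_\gamma^*\aA_{Q'}$ from \eqref{y2}, and the splitting of $0 \to R \to E \to \rho_\gamma^* E' \to 0$; the paper carries this out by constructing an explicit morphism $K(\rho_\gamma^* E') \otimes \rho_\gamma^*\aA_{Q'} \to K(E) \otimes \aA_Q$ of Koszul complexes (first in the affine case, then gluing), whereas you package the same content as the associativity/base-change identity $\oO_U \dotimes_{\oO_E} r_\gamma^*(-) \cong \oO_U \dotimes_{\oO_{\rho_\gamma^* E'}} (-)$ together with flat base change along the \'etale $\rho_\gamma$. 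Your derived-category formulation is slightly slicker in that it avoids the explicit reduction to affine charts and makes the functoriality \eqref{y3} more transparently a consequence of naturality of base change, but the mathematical content is the same.
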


\begin{proof} Let us assume first that $Q$ and $Q'$ are affine local charts, so that $U = \Spec B,\ U' = \Spec A$ are affine and $E,\ E'$ are free modules.

By the definition, using the injection of rings $\oO_{\rho_\gamma^* E'} \to \oO_E$, we get a morphism of complexes of quasicoherent $\oO_{\rho_\gamma^* E'}$-modules
\begin{align*}
    K(\rho_\gamma^* E') \lr K(E).
\end{align*}
Thus we obtain a morphism
\begin{align} \label{morloc}
    K(\rho_\gamma^* E') \otimes_{\oO_{\rho_\gamma^* E'}} \rho_\gamma^* \aA_{Q'} \lr K(E) \otimes_{\oO_{\rho_\gamma^* E'}} \rho_\gamma^* \aA_{Q'}.
\end{align}

Now $r_\gamma^*( \rho_\gamma^* \aA_{Q'} ) $ is naturally isomorphic to $\aA_Q$ by the commutativity requirements in the definition of $\gamma$ and so we have the isomorphism
\begin{align*}
  \oO_E \otimes_{\oO_{\rho_\gamma^* E'}} \rho_\gamma^* \aA_{Q'}  \cong r_\gamma^*( \rho_\gamma^* \aA_{Q'} ) \cong \aA_Q.
\end{align*}
This allows us to express the right hand side of \eqref{morloc} as
\begin{align*}
 K(E) \otimes_{\oO_{\rho_\gamma^* E'}} \rho_\gamma^* \aA_{Q'} \cong K(E) \otimes_{\oO_E} \oO_E \otimes_{\oO_{\rho_\gamma^* E'}} \rho_\gamma^* \aA_{Q'} \cong K(E) \otimes_{\oO_E} \aA_Q
\end{align*}
giving an $\oO_{\rho_\gamma^* E'}$-linear morphism
\begin{align} \label{morloc2}
    K(\rho_\gamma^* E') \otimes_{\oO_{\rho_\gamma^* E'}} \rho_\gamma^* \aA_{Q'} \lr K(E) \otimes_{\oO_E} \aA_Q
\end{align}

One may check that this is a quasi-isomorphism by splitting the exact sequence
\begin{align*} 
    0 \lr R \lr E \xrightarrow{r_\gamma} \rho_\gamma^* E' \lr 0
\end{align*}
Since the morphism $\rho_\gamma$ is flat, the homology of the left hand side of \eqref{morloc2} computes $\rho_\gamma^* \hH_{Q'}^i(\aA)$ while the right hand side gives $\hH_Q^i(\aA)$. Thus we obtain an isomorphism 
\begin{align*}
    \rho_\gamma^* \hH_{Q'}^i(\aA) \to \hH_Q^i(\aA) 
\end{align*}

For the general case, we may cover $U$ and $U'$ by open affine subschemes to obtain affine local charts. It is routine to check that the maps obtained agree on overlaps giving the desired isomorphism.

The equality \eqref{y3} is straightforward to check and we omit it. 
\end{proof}

We now introduce a way to compare two local charts on $\fF$ with the same base $\rho \colon U \to X$.

\begin{defi} \emph{(Common roof)}
Let $Q=(U, \rho, E, r_E)$ and $Q'=(U, \rho, E', r_{E'})$ be two local charts with the same base $\rho \colon U \to X$. Let $W \to E \times_{\fF|_U} E'$ be a surjection, where $W$ is a locally free sheaf on $U$ and $E \times_{\fF|_U} E'$ denotes the kernel of the morphism 
\begin{align*} 
   E \oplus E' \xrightarrow{(r_E,-r_{E'})} \fF|_U \oplus \fF|_U \xrightarrow{+} \fF|_U 
\end{align*}
so that we have a commutative diagram
\begin{align*}
    \xymatrix{
     & W \ar[d] & \\
     & E \times_{\fF|_U} E' \ar[dl] \ar[dr] & \\
     E \ar[dr]_-{r_E} & & E' \ar[dl]^-{r_{E'}} \\
     & \fF|_U &
    }
\end{align*}
where all the arrows are surjective. 

Denote the induced surjection $W \to \fF|_U$ by $r_W$. The local chart $R=(U, \rho, W, r_W)$ is a \emph{common roof} for the charts $Q=(U, \rho, E, r_E)$ and $Q'=(U, \rho, E', r_{E'})$. There are natural morphisms of local charts $\gamma \colon R \to Q$ and $\gamma' \colon R \to Q'$.
\end{defi}

\begin{rmk} \label{restriction of roofs}
If $R$ is a roof for two charts $Q, Q'$ as above and $V \to U$ is \'{e}tale, then we may restrict (pullback) the roof $R$ to $V$ to obtain a common roof between the charts $Q|_V$ and $Q' |_V$. 

By definition, $E \times_{\fF|_U} E'$ fits in an exact sequence
\begin{align*}
    0 \lr E \times_{\fF|_U} E' \lr E \oplus E' \xrightarrow{r_E - r_{E'}} \fF|_U \lr 0.
\end{align*}
Since $V \to U$ is flat, pulling back to $V$ gives an exact sequence
\begin{align*}
    0 \lr \left( E \times_{\fF|_U} E' \right)|_V \lr E|_V \oplus E'|_V \xrightarrow{r_E|_V - r_{E'}|_V} \fF|_V \lr 0
\end{align*}
implying that 
$$\left( E \times_{\fF|_U} E' \right) |_V \cong E|_V \times_{\fF|_V} E'|_V.$$

Hence we can pull back the surjection $W \to E \times_{\fF|_U} E'$ to obtain a surjection $W|_V \to E|_V \times_{\fF|_V} E'|_V$ inducing a roof, which is the restriction of $R$ to $V$ and denoted by $R|_V$.
\end{rmk}

Given a coherent sheaf $\aA$ on $\fF$, local charts $Q, Q'$ as above and a roof $R$, we can define the comparison isomorphism
\begin{align*}
    h_R^i := h^i(\gamma') \circ h^i(\gamma)^{-1} \colon \hH_Q^i(\aA) \lr \hH_{Q'}^i(\aA)
\end{align*}

\begin{lem} \label{independence from roof}
The comparison isomorphism $h_R^i$ does not depend on the choice of roof $R$.
\end{lem}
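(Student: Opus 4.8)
The plan is to reduce the independence statement to a universal property of common roofs, namely that any two roofs $R_1, R_2$ over the same pair of charts $Q=(U,\rho,E,r_E)$, $Q'=(U,\rho,E',r_{E'})$ admit a third roof dominating both. Indeed, given $R_j=(U,\rho,W_j,r_{W_j})$ with $W_j\twoheadrightarrow E\times_{\fF|_U}E'$, one forms a common roof $\tilde R$ for the pair $R_1,R_2$ viewed as charts over $\fF|_U$; concretely, choose a locally free sheaf $\tilde W$ on $U$ surjecting onto $W_1\times_{\fF|_U}W_2$. This $\tilde W$ surjects onto both $W_1$ and $W_2$ compatibly, and (since the maps $W_j\to E\times_{\fF|_U}E'$ commute with the surjections down to $\fF|_U$) the composite $\tilde W\to W_j\to E\times_{\fF|_U}E'$ is independent of $j$, so $\tilde R$ is simultaneously a roof refining $R_1$ and $R_2$ with morphisms $\delta_j\colon\tilde R\to R_j$ lying over the corresponding $\gamma,\gamma'$ to $Q,Q'$.

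Granting such a refining roof, the proof is a diagram chase using the functoriality \eqref{y3} of Proposition~\ref{morphism of local charts}. Write $\gamma_j\colon R_j\to Q$ and $\gamma_j'\colon R_j\to Q'$ for the structure morphisms, so $h_{R_j}^i=h^i(\gamma_j')\circ h^i(\gamma_j)^{-1}$. The compatibility of $\delta_j$ with the structure morphisms gives $\gamma_j\circ\delta_j=\tilde\gamma$ and $\gamma_j'\circ\delta_j=\tilde\gamma'$ for the structure morphisms $\tilde\gamma\colon\tilde R\to Q$, $\tilde\gamma'\colon\tilde R\to Q'$ (here one must be a little careful that these compositions really agree for $j=1,2$, which is exactly where the independence of the composite $\tilde W\to E\times_{\fF|_U}E'$ from $j$ is used; after pulling back everything to $U$ all the relevant squares commute, so the morphisms of local charts are literally equal). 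Applying \eqref{y3} and that $h^i(\id)=\id$, we get $h^i(\gamma_j)\circ h^i(\delta_j)=h^i(\tilde\gamma)$ and $h^i(\gamma_j')\circ h^i(\delta_j)=h^i(\tilde\gamma')$, whence $h_{R_j}^i=h^i(\gamma_j')\circ h^i(\gamma_j)^{-1}=h^i(\tilde\gamma')\circ h^i(\delta_j)^{-1}\circ h^i(\delta_j)\circ h^i(\tilde\gamma)^{-1}=h^i(\tilde\gamma')\circ h^i(\tilde\gamma)^{-1}$, which is manifestly independent of $j$. Hence $h_{R_1}^i=h_{R_2}^i$.

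One subtlety is that the isomorphisms $h^i(\gamma)$ of Proposition~\ref{morphism of local charts} are constructed \'etale-locally on $U$ and glued, so strictly the verification of $h^i(\gamma_j)\circ h^i(\delta_j)=h^i(\tilde\gamma)$ should be carried out on an affine cover, where all the Koszul complexes are explicit and the maps in the proof of Proposition~\ref{morphism of local charts} are honest maps of complexes of free modules; on such a cover the identity \eqref{y3} is the statement that the composite $K(\tilde W)\otimes\to K(W_j)\otimes\to K(E)\otimes$ of the natural maps agrees with the direct map $K(\tilde W)\otimes\to K(E)\otimes$, which is immediate from the transitivity of the ring inclusions $\oO_{E}\supset\oO_{W_j}\supset\oO_{\tilde W}$ (after the various identifications of $\aA_R\cong r^*\rho^*\aA_Q$). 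By the uniqueness of the glued map this then holds globally. The only real obstacle is therefore bookkeeping: making sure the refining roof $\tilde R$ genuinely maps to $R_1$ and $R_2$ as roofs (not merely as charts), i.e.\ that its two composite structure morphisms to $Q$ and to $Q'$ coincide; once that is set up the rest is the formal argument above.
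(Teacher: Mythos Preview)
Your overall strategy---dominate both roofs by a third and apply the functoriality \eqref{y3}---is exactly the paper's, and the diagram chase at the end is fine. The gap is in the construction of the dominating roof. You take $\tilde W$ surjecting onto $W_1\times_{\fF|_U}W_2$ and then assert that the two composites $\tilde W\to W_j\to E\times_{\fF|_U}E'$ coincide. They need not. By construction of $W_1\times_{\fF|_U}W_2$ you only know that the two composites to $\fF|_U$ agree; since $E\times_{\fF|_U}E'\to\fF|_U$ has a nontrivial kernel in general, equality in $\fF|_U$ does not force equality in $E\times_{\fF|_U}E'$. (A clean counterexample: if $\fF|_U=0$ then $E\times_{\fF|_U}E'=E\oplus E'$ and $W_1\times_{\fF|_U}W_2=W_1\oplus W_2$, and the two composites are $(w_1,w_2)\mapsto\sigma_1(w_1)$ and $(w_1,w_2)\mapsto\sigma_2(w_2)$, which are visibly different.) Consequently the morphisms $\tilde\gamma=\gamma_1\circ\delta_1$ and $\gamma_2\circ\delta_2$ to $Q$ are genuinely different morphisms of local charts, and your computation $h_{R_j}^i=h^i(\tilde\gamma')\circ h^i(\tilde\gamma)^{-1}$ no longer produces a $j$-independent expression.

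The fix is to form the fiber product over $E\times_{\fF|_U}E'$ rather than over $\fF|_U$: take $\tilde W$ to be a locally free sheaf surjecting onto $W_1\times_{E\times_{\fF|_U}E'}W_2$ (working affine-locally so such a surjection exists). Then by definition of this fiber product the two composites $\tilde W\to W_j\to E\times_{\fF|_U}E'$ agree on the nose, so the induced morphisms $\tilde R\to Q$ and $\tilde R\to Q'$ are independent of $j$, and your chase goes through verbatim. This is precisely what the paper does when it asks for $W_3$ making the diagram with vertices $W_1,W_2,E\times_{\fF|_U}E'$ commute.
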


\begin{proof}
Suppose that we have two roofs $R_1, R_2$ induced by two surjections $W_1 \to E \times_{\fF|_U} E'$ and $W_2  \to E \times_{\fF|_U} E'$. Since this is a local statement, we may assume that all the local charts are affine.

Let $W_3$ be a locally free sheaf fitting in a commutative diagram
\begin{align*}
    \xymatrix{
    & W_3 \ar[dl] \ar[dr] & \\
     W_1 \ar[dr] & & W_2 \ar[dl] \\
     & E \times_{\fF|_U} E' &
    }
\end{align*}
where all the arrows are surjective. We obtain an induced local chart $R_3$ with morphisms to $R_1$ and $R_2$.

Using \eqref{y3}, it is now a simple diagram chase with roofs to verify that 
$$ h_{R_1}^i = h_{R_3}^i = h_{R_2}^i $$
which is what we want.
\end{proof}

Suppose now that $\aA$ is a coherent sheaf on the sheaf stack $\fF$. Using the above, we may globalize the Koszul homology sheaves as follows.

\begin{constr} \label{construction} Let $\lbrace Q\lalp = (U\lalp, \rho\lalp, E\lalp, r_{E\lalp}) \rbrace_{\alpha \in A}$ be a collection of affine local charts so that the morphisms $\rho\lalp \colon U\lalp \to X$ give an \'{e}tale cover of $X$. We write $U\lab = U\lalp \times_X U\lbet$ and $U\labc = U\lalp \times_X U\lbet \times_X U_\gamma$. \\

(a) For each $\alpha$, we obtain the $i$-th Koszul homology sheaf $\bB\lalp := \hH_{Q\lalp}^i(\aA)$, which is a coherent $\oO_{U\lalp}$-module. \\
    
(b) For any two indices $\alpha, \beta$, the restrictions $\bB\lalp|_{U\lab} = \hH_{Q\lalp}^i(\aA)|_{U\lab}$ and $\bB\lbet|_{U\lab}=\hH_{Q\lbet}^i(\aA)|_{U\lab}$ are naturally isomorphic to $\hH_{Q\lalp|_{U\lab}}^i(\aA)$ and $\hH_{Q\lbet|_{U\lab}}^i(\aA)$ respectively. The same is true for any further restriction to an \'{e}tale open $V\lal \to U\lab$. \\
    
(c) We may now construct a canonical comparison isomorphism $$g\lab \colon \bB\lalp|_{U\lab} \to \bB\lbet|_{U\lab}$$
using roofs and Lemma \ref{independence from roof}. 

Let $\lbrace V\lal \rbrace_{\lambda \in \Lambda}$ be a cover of $U\lab$ by Zariski open affine subschemes. For any $\lambda$, since $V\lal$ is affine, there exists a roof $R\lal$ for the restrictions $Q\lalp|_{V\lal}$ and $Q\lbet|_{V\lal}$, induced by a surjection 
$$W\lal \lr E\lalp|_{V\lal} \times_{\fF|_{V\lal}} E\lbet|_{V\lal}$$
where $W\lal$ is a free sheaf on $V\lal$. Using this roof,  
$$ g\lal := h_{R\lal}^i \colon \bB\lalp |_{V\lal} \lr \bB\lbet |_{V\lal}.$$
For any two indices $\lambda, \mu \in \Lambda$, we write $V_{\lambda \mu}:= V_\lambda \times_{U\lab} V_\mu$. The restrictions $g\lal|_{V_{\lambda \mu}}$ and $g_\mu |_{V_{\lambda \mu}}$ are the comparison isomorphisms induced by the roofs $R\lal|_{V_{\lambda \mu}}$ and $R_\mu |_{V_{\lambda \mu}}$ (see Remark~\ref{restriction of roofs}) and hence by Lemma~\ref{independence from roof} we must have $g\lal|_{V_{\lambda \mu}} = g_\mu|_{V_{\lambda \mu}}$. By \'{e}tale descent the collection of morphisms $\lbrace g\lal \rbrace$ glues to give a comparison isomorphism $g\lab$ as desired.\\
    
(d) The comparison isomorphisms $g\lab$ of part (c) satisfy the cocycle condition. Up to \'{e}tale shrinking, the composition $g_{\beta \gamma}|_{U\labc} \circ g\lab|_{U\labc}$ is induced by two roofs $R\lab$ over $Q\lalp|_{U\labc}$ and $Q\lbet|_{U\labc}$ and $R_{\beta \gamma}$ over $Q\lbet|_{U\labc}$ and $Q_\gamma|_{U\labc}$. Similarly, the isomorphism $g_{\alpha \gamma}|_{U\labc}$ is induced by a roof $R_{\alpha \gamma}$ over $Q\lalp|_{U\labc}$ and $Q_\gamma|_{U\labc}$. As in the proof of Lemma~\ref{independence from roof}, we may find (up to further shrinking) a common roof $R_{\alpha \gamma}'$ over $R\lab$ and $R_{\beta \gamma}$ so that $g_{\beta \gamma}|_{U\labc} \circ g\lab|_{U\labc}$ is the isomorphism induced by $R_{\alpha \gamma}'$. But then Lemma~\ref{independence from roof} again implies that $$g_{\beta \gamma}|_{U\labc} \circ g\lab|_{U\labc} = g_{\alpha \gamma}|_{U\labc}.$$ \\
    
(e) It follows that the sheaves $\bB\lalp$ descend to a sheaf $\hH^i(\aA)$. It is a standard check to verify that this sheaf is independent of the particular choice of collection of affine local charts in part (a) that cover $X$ by taking the union of any two such collections and showing that the sheaves are canonically isomorphic. We leave the details to the reader.
\end{constr}

\begin{defi} \label{Koszul homology sheaves}
Let $\aA$ be a coherent sheaf on a sheaf stack $\fF$ on a Deligne-Mumford stack $X$. The sheaf $\hH_K^i(\aA) \in \Coh(X)$ is defined to be the \emph{$i$-th Koszul homology sheaf} of $\aA$.
\end{defi}

\subsection{$K$-theoretic Gysin map} We are now ready to define a $K$-theoretic operation of intersecting with the zero section $0_\fF$ of a sheaf stack $\fF$.

\begin{defi} \emph{($K$-theoretic Gysin map)}
The \emph{$K$-theoretic Gysin map} of a sheaf stack $\fF$ is a homomorphism 
$$0_\fF^! \colon K_0(\fF) \lr K_0(X)$$ 
such that for any coherent sheaf $\aA$ on $\fF$,  we have
\begin{align} \label{Gysin morphism}
    0_\fF^! [\aA] = \sum_{i \geq 0} (-1)^i [\hH_K^i (\aA)] \in K_0(X).
\end{align}
\end{defi}

\begin{prop}
The Gysin map $0_\fF^! \colon K_0(\fF) \to K_0(X)$ is well-defined.
\end{prop}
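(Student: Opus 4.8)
The plan is to show that the assignment $[\aA]\mapsto \sum_{i\geq 0}(-1)^i[\hH_K^i(\aA)]$ respects the defining relations of $K_0(\fF)$, i.e. that it is additive on short exact sequences and well-defined on isomorphism classes. Well-definedness on isomorphism classes is immediate from Construction~\ref{construction}, since an isomorphism $\aA\cong\aA'$ of coherent sheaves on $\fF$ induces compatible isomorphisms $\aA_Q\cong\aA'_Q$ on every local chart, hence isomorphisms $\hH_Q^i(\aA)\cong\hH_Q^i(\aA')$ commuting with the comparison maps $g\lab$, which therefore glue to an isomorphism $\hH_K^i(\aA)\cong\hH_K^i(\aA')$ in $\Coh(X)$. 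Note also that the sum in \eqref{Gysin morphism} is finite: on any affine local chart $Q=(U,\rho,E,r_E)$ with $E$ of rank $n$, the Koszul complex $\wedge^\bullet\pi_E^*E^\vee$ has length $n$, so $\hH_Q^i(\aA)=0$ for $i>n$ and hence $\hH_K^i(\aA)=0$ for all large $i$.

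The heart of the argument is additivity. Given a short exact sequence $0\to\aA\to\bB\to\cC\to 0$ of coherent sheaves on $\fF$, I would fix a collection of affine local charts $\{Q\lalp\}$ whose bases cover $X$, as in Construction~\ref{construction}. On each chart $Q\lalp$ we get a short exact sequence $0\to\aA_{Q\lalp}\to\bB_{Q\lalp}\to\cC_{Q\lalp}\to 0$ of coherent sheaves on the scheme $E\lalp$ by Definition~\ref{ses}. Tensoring with the Koszul complex $K(E\lalp)$ and taking the long exact sequence of homology yields, for each $\alpha$, a long exact sequence relating $\hH_{Q\lalp}^i(\aA)$, $\hH_{Q\lalp}^i(\bB)$, $\hH_{Q\lalp}^i(\cC)$. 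The key point is naturality: the connecting maps and the maps in this long exact sequence are compatible with the isomorphisms $h^i(\gamma)$ of Proposition~\ref{morphism of local charts} associated to morphisms of local charts, because those isomorphisms are themselves induced by the morphism of Koszul complexes $K(\rho_\gamma^*E')\to K(E)$ tensored with $\aA$, $\bB$, $\cC$ respectively, and tensoring with a fixed complex and taking homology is functorial. Hence the long exact sequences on the various charts are compatible with the comparison isomorphisms $g\lab$ built from roofs, and by \'etale descent they glue to a long exact sequence of coherent sheaves on $X$ relating $\hH_K^i(\aA)$, $\hH_K^i(\bB)$, $\hH_K^i(\cC)$. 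Taking the alternating sum of classes in $K_0(X)$ along this (finite) long exact sequence gives $0_\fF^![\bB]=0_\fF^![\aA]+0_\fF^![\cC]$, which is exactly the relation we need.

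I anticipate the main obstacle is purely bookkeeping rather than conceptual: one must verify that the connecting homomorphisms of the Koszul long exact sequences on different charts are genuinely compatible with the gluing data $g\lab$, i.e. that the comparison isomorphisms $h_R^i$ defined via common roofs are natural in $\aA$. This follows because a roof $R$ for $Q\lalp|_{V\lal}$ and $Q\lbet|_{V\lal}$ gives morphisms of local charts $\gamma\colon R\to Q\lalp$ and $\gamma'\colon R\to Q\lbet$, and each $h^i(\gamma)$, $h^i(\gamma')$ is induced by tensoring the single morphism of Koszul complexes $K(\rho_\gamma^*E\lalp)\to K(W)$ (respectively for $\beta$) with the relevant sheaf; since a morphism of complexes induces a morphism of long exact homology sequences, naturality in the short exact sequence $0\to\aA\to\bB\to\cC\to 0$ is automatic. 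The only care needed is that all the roofs, their common refinements (as in the proof of Lemma~\ref{independence from roof}), and the affine open covers $\{V\lal\}$ can be chosen simultaneously for $\aA$, $\bB$ and $\cC$, which is harmless since roofs depend only on the local charts and not on the sheaves. Once this naturality is in place, additivity and hence well-definedness of $0_\fF^!$ follow formally.
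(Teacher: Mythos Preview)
Your proposal is correct and follows essentially the same approach as the paper: finiteness of the sum via bounded rank on a finite affine cover, then additivity on short exact sequences by tensoring with the Koszul complex on each local chart, observing that the resulting long exact sequences are natural under morphisms of charts and hence glue to a long exact sequence of the global Koszul homology sheaves $\hH_K^i$. Your extra care about naturality of the connecting maps under roofs is exactly the ``functorial with respect to morphisms $\gamma\colon Q\to Q'$'' step the paper invokes without elaboration.
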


\begin{proof}
Firstly, the sum in \eqref{Gysin morphism} is finite, since for any cover of $X$ by finitely many affine local charts $\lbrace Q\lalp = (U\lalp, \rho\lalp, E\lalp, r_{E\lalp}) \rbrace_{\alpha \in A}$ it is clear by the definitions that $\hH_K^i (\aA) = 0$ for any $i > \max_{\alpha \in A} \rk E\lalp$.

Now we need to check that for any short exact sequence (cf. Definition~\ref{ses})
\begin{align*}
    0 \lr \aA \lr \bB \lr \cC \lr 0
\end{align*}
we have 
\begin{align} \label{2.4}
    0_\fF^![\bB] = 0_\fF^![\aA] + 0_\fF^![\cC].
\end{align}
Let $Q = (U, \rho, E, r_E)$ be a local chart for $\fF$. Then we obtain an exact sequence of coherent sheaves on $E$
\begin{align*}
    0 \lr \aA_Q \lr \bB_Q \lr \cC_Q \lr 0
\end{align*}
which induces an exact triangle of complexes
\begin{align*}
    K(E) \otimes_{\oO_E} \aA_Q \lr K(E) \otimes_{\oO_E} \bB_Q \lr K(E) \otimes_{\oO_E} \cC_Q \lr K(E) \otimes_{\oO_E} \aA_Q [1]
\end{align*}
and thus a long exact sequence in cohomology
\begin{align*}
    ... \lr \hH^i_Q (\aA) \lr \hH^i_Q (\bB) \lr \hH^i_Q (\cC) \lr \hH^{i+1}_Q (\aA) 
    \lr ...
\end{align*}
These long exact sequences are functorial with respect to morphisms $\gamma \colon Q \to Q'$ between local charts and hence, as in Construction~\ref{construction}, we get a long exact sequence of Koszul homology sheaves
\begin{align*}
    ... \lr \hH^i_K (\aA) \lr \hH^i_K (\bB) \lr \hH^i_K (\cC) \lr \hH^{i+1}_K (\aA) 
    \lr ...
\end{align*}
which immediately implies \eqref{2.4} by the definition of Gysin morphism \eqref{Gysin morphism}. \end{proof}

\begin{rmk}
If there exists a surjection $r \colon E \to \fF$ where $E$ is a locally free sheaf on $X$, for example when $X$ has the resolution property, then it follows immediately by the definition that for any coherent sheaf $\aA$ on $\fF$ we have the equality
\begin{align*}
    0_\fF^![\aA] = 0_E^! [\aA_Q] = [\aA_Q \otimes_{\oO_E}^L \oO_X] \in K_0(X)
\end{align*}
where $Q = (X, \id, E, r)$.

Our definition is obviously consistent with the usual Gysin morphism for vector bundles when $\fF$ is locally free by taking $E=\fF$ and $r=\id$.
\end{rmk}

\begin{rmk}
The Gysin map constructed here should be viewed as the $K$-theoretic analogue of the Chow theoretic Gysin map $0_\fF^!$ constructed in \cite{LiChang}.
\end{rmk}

\section{Almost Perfect Obstruction Theory}

As usual, let $X \to S$ be a Deligne-Mumford stack of finite type over a smooth Artin stack of pure dimension. If $X$ admits a semi-perfect obstruction theory $\phi$ (cf. Appendix~\ref{appendix}), then by the results of \cite{LiChang} there exists an intrinsic normal cone cycle $[\fc_\phi] \in Z_* \Ob_\phi$ in the associated sheaf stack $\Ob_\phi$ whose intersection with the zero section of $\Ob_\phi$ defines the virtual fundamental class of $X$. 

In this section, we define the notion of an almost perfect obstruction theory, which is stronger than that of a semi-perfect obstruction theory but still weaker than that of a perfect obstruction theory. This turns out to be an appropriate intermediary notion which allows us to define an intrinsic normal cone stack $\fc_\phi \sub \Ob_\phi$ in the sheaf stack $\Ob_\phi$. This arises in most known natural examples (cf. \S 5) and will be used in \S 4 to define a virtual structure sheaf.

\subsection{Almost perfect obstruction theory} We begin by giving the definition. See Definition~\ref{Perf obs th} for the definition of a perfect obstruction theory. 

\begin{defi} \emph{(Almost perfect obstruction theory)} \label{almost perfect obs th}
Let $X \to S$ be a morphism, where $X$ is a DM stack of finite presentation and $S$ is a smooth Artin stack of pure dimension. An \emph{almost perfect obstruction theory} $\phi$ consists of an \'{e}tale covering $\lbrace U_\alpha \to X \rbrace_{\alpha \in A}$ 
of $X$ and perfect obstruction theories $\phi_\alpha \colon E_\alpha \to \bL_{U_\alpha / S}^{\geq -1}$ of $U_\alpha$ such that the following hold. 
\begin{enumerate}
\item For each pair of indices $\alpha, \beta$, there exists an isomorphism \begin{align*}
\psi_{\alpha \beta} \colon \Ob_{\phi_\alpha} \vert_{U_{\alpha\beta}} \lra \Ob_{\phi_\beta} \vert_{U_{\alpha\beta}}
\end{align*}
so that the collection $\lbrace \Ob_{\phi\lalp}=h^1(E_\alpha^\vee), \psi\lab \rbrace$ gives descent data of a sheaf $\Ob_\phi$, called the obstruction sheaf, on $X$.
\item For each pair of indices $\alpha, \beta$, there exists an \'{e}tale covering $\lbrace V_\lambda \to U\lab \rbrace_{\lambda \in \Gamma}$ of $U\lab=U_\alpha\times_XU_\beta$ such that for any $\lambda$, the perfect obstruction theories $E_\alpha \vert_{V_{\lambda}}$ and $E_\beta \vert_{V_{\lambda}}$ are isomorphic and compatible with $\psi\lab$. This means that there exists an isomorphism
\begin{align*}
    \eta_{\alpha\beta\lambda} \colon E_\alpha \vert_{V_{\lambda}} \lr E_\beta \vert_{V_{\lambda}}
\end{align*} 
in $D^b(\Coh V_\lambda)$ fitting in a commutative diagram
\begin{align} \label{loc 5.1}
    \xymatrix{
    E_\alpha \vert_{V_{\lambda}} \ar[d]_-{\phi\lalp|_{V_\lambda}} \ar[r]^-{ \eta_{\alpha\beta\lambda}} & E_\beta \vert_{V_{\lambda}} \ar[d]^-{\phi\lbet|_{V_\lambda}} \\
    \bL_{U_\alpha / S}^{\geq -1}|_{V_\lambda} \ar[r] \ar[dr] & \bL_{U_\beta / S}^{\geq -1}|_{V_\lambda} \ar[d] \\
    & \bL_{V_\lambda / S}^{\geq -1}
    }
\end{align}
which moreover satisfies $h^1(\eta_{\alpha\beta\lambda}^\vee) = \psi\lab^{-1}|_{V_\lambda}$.
\end{enumerate}
\end{defi}

\begin{rmk}
We note that the isomorphisms $\eta_{\alpha\beta\lambda}$ of the definition are not required to satify any compatibility relations, in contrast to the isomorphisms $\psi\lab{}$.
\end{rmk}

An almost perfect obstruction theory is in particular a semi-perfect obstruction theory by the following proposition, which shows that part (2) of Definition \ref{almost perfect obs th} is a strengthening of the second condition in the definition of a semi-perfect obstruction theory (cf. Definition~\ref{semi-perfect obs th}).

\begin{prop}\label{y4}
An almost perfect obstruction theory on $X \to S$ naturally induces a semi-perfect obstruction theory.
\end{prop}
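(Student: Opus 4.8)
The plan is to check the two defining conditions of a semi-perfect obstruction theory (Definition~\ref{semi-perfect obs th}) directly from the data of the almost perfect obstruction theory. The \'etale cover $\{U_\alpha \to X\}$, the perfect obstruction theories $\phi_\alpha \colon E_\alpha \to \bL_{U_\alpha/S}^{\geq -1}$, and the isomorphisms $\psi_{\alpha\beta} \colon \Ob_{\phi_\alpha}|_{U_{\alpha\beta}} \to \Ob_{\phi_\beta}|_{U_{\alpha\beta}}$ forming descent data for $\Ob_\phi$ are furnished verbatim by part~(1) of Definition~\ref{almost perfect obs th}, and this is exactly the first requirement of a semi-perfect obstruction theory (including the cocycle condition on the $\psi_{\alpha\beta}$). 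Hence the entire content of the proposition is the second requirement: that the local \emph{obstruction assignments} of the $\phi_\alpha$ are identified by the $\psi_{\alpha\beta}$. This is also where part~(2) of Definition~\ref{almost perfect obs th} is genuinely stronger than the analogous condition for a semi-perfect obstruction theory, and the proof will make the implication explicit.

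I recall the obstruction assignment of a perfect obstruction theory $\phi \colon E \to \bL_{Y/S}^{\geq -1}$: for an affine scheme $T$, a square-zero extension $T \hookrightarrow \bar T$ with ideal sheaf $J$, and a morphism $g \colon T \to Y$ over $S$, one has the canonical class $\omega(g,\bar T) \in \Ext^1_{\oO_T}(Lg^*\bL_{Y/S}, J)$ obstructing the extension of $g$ to $\bar T$, and $\mathrm{ob}(\phi,g,\bar T) \in \Gamma(T, g^*\Ob_\phi \otimes J)$ is the image of $\omega(g,\bar T)$ under the map $\Ext^1(Lg^*\bL_{Y/S}^{\geq -1}, J) \to \Ext^1(Lg^*E, J) \cong \Gamma(T, g^* h^1(E^\vee)\otimes J)$ induced by $\phi$ (using that $E$ is perfect of amplitude $[-1,0]$, so that only the top cohomology sheaf $h^1(E^\vee)$ contributes to $\Ext^1$, regardless of flatness of $J$). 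What must be shown is that for every $g \colon T \to U_{\alpha\beta}$ and every $(\bar T, J)$ as above, $\psi_{\alpha\beta}\big(\mathrm{ob}(\phi_\alpha,g,\bar T)\big) = \mathrm{ob}(\phi_\beta,g,\bar T)$ in $\Gamma(T, g^*\Ob_\phi\otimes J)$.

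Since this is an equality of global sections of a sheaf on $T$, it suffices to verify it \'etale-locally. Pulling back the \'etale cover $\{V_\lambda \to U_{\alpha\beta}\}$ of part~(2) along $g$ yields an \'etale cover $\{T_\lambda := T\times_{U_{\alpha\beta}} V_\lambda \to T\}$ with induced maps $g_\lambda \colon T_\lambda \to V_\lambda$; by topological invariance of the \'etale site the thickening $T \hookrightarrow \bar T$ pulls back uniquely to a square-zero extension $T_\lambda \hookrightarrow \bar T_\lambda$ with ideal $J|_{T_\lambda}$. As each $V_\lambda \to U_\alpha$ and $V_\lambda \to U_\beta$ is \'etale, the maps $\bL_{U_\alpha/S}^{\geq -1}|_{V_\lambda} \to \bL_{V_\lambda/S}^{\geq -1}$ and $\bL_{U_\beta/S}^{\geq -1}|_{V_\lambda} \to \bL_{V_\lambda/S}^{\geq -1}$ appearing in~\eqref{loc 5.1} are isomorphisms, and compatibility of the canonical obstruction class with flat base change identifies $\mathrm{ob}(\phi_\alpha,g,\bar T)|_{T_\lambda}$ and $\mathrm{ob}(\phi_\beta,g,\bar T)|_{T_\lambda}$ with the obstruction classes of the restricted perfect obstruction theories $\phi_\alpha|_{V_\lambda}$, $\phi_\beta|_{V_\lambda}$ of $V_\lambda/S$ for the data $(g_\lambda,\bar T_\lambda)$.

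Now the commutativity of~\eqref{loc 5.1} says precisely that $\eta_{\alpha\beta\lambda} \colon E_\alpha|_{V_\lambda} \to E_\beta|_{V_\lambda}$ is a morphism of obstruction theories of $V_\lambda/S$, i.e. it intertwines $\phi_\alpha|_{V_\lambda}$ and $\phi_\beta|_{V_\lambda}$. Applying the contravariant functor $\Ext^1(Lg_\lambda^*(-),J)$ to the commuting triangle $E_\alpha|_{V_\lambda} \xrightarrow{\eta_{\alpha\beta\lambda}} E_\beta|_{V_\lambda} \to \bL_{V_\lambda/S}^{\geq -1}$ gives $\mathrm{ob}(\phi_\alpha|_{V_\lambda},g_\lambda,\bar T_\lambda) = \eta_{\alpha\beta\lambda}^*\,\mathrm{ob}(\phi_\beta|_{V_\lambda},g_\lambda,\bar T_\lambda)$, where $\eta_{\alpha\beta\lambda}^*$ is the map on $\Gamma(T_\lambda, g_\lambda^*(-)\otimes J)$ induced by $h^1(\eta_{\alpha\beta\lambda}^\vee)$; this map depends on $\eta_{\alpha\beta\lambda}$ only through $h^1(\eta_{\alpha\beta\lambda}^\vee)$, so the fact that the $\eta_{\alpha\beta\lambda}$ need not be canonical or satisfy cocycle relations causes no ambiguity. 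By the final requirement of part~(2), $h^1(\eta_{\alpha\beta\lambda}^\vee) = \psi_{\alpha\beta}^{-1}|_{V_\lambda}$, so the identity becomes $\psi_{\alpha\beta}|_{T_\lambda}\big(\mathrm{ob}(\phi_\alpha,g,\bar T)|_{T_\lambda}\big) = \mathrm{ob}(\phi_\beta,g,\bar T)|_{T_\lambda}$. Since $\{T_\lambda\}$ covers $T$, gluing yields the required equality on $T$, and therefore the induced data form a semi-perfect obstruction theory. The one non-formal ingredient, and the step I expect to require the most care, is the functoriality of the obstruction-class construction under morphisms of (restricted) obstruction theories together with its compatibility with \'etale base change; once these standard deformation-theoretic facts are invoked, everything else is bookkeeping with \'etale covers and the identification of cotangent complexes.
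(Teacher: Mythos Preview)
Your proof is correct and follows essentially the same line as the paper's: both pass to the \'etale cover $\{V_\lambda\}$ and use the commutativity of~\eqref{loc 5.1} together with $h^1(\eta_{\alpha\beta\lambda}^\vee) = \psi_{\alpha\beta}^{-1}|_{V_\lambda}$ to identify the obstruction assignments. The only cosmetic difference is that the paper uses the pointwise formulation of Definitions~\ref{Inf lift prob}--\ref{Obs assignment} (local Artinian $\bar\Delta$, so $g$ lifts directly through a single $V_\lambda$), whereas you work with a general affine $T$ and descend along the pulled-back cover---slightly more than strictly needed, but it specializes immediately.
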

See the appendix for a proof.  Certainly the usual perfect obstruction theory of \cite{BehFan} is 
an almost perfect obstruction theory with the \'etale cover $\id:X\to X$. We thus have 
\beq\label{y5} \text{POT} \Rightarrow \text{almost POT} \Rightarrow \text{semi-POT}\eeq
where $\text{POT}$ stands for perfect obstruction theory.

\subsection{The coarse intrinsic normal cone stack} Suppose that $X \to S$ admits an almost perfect obstruction theory.  Let $\fn_{U\lalp/S}=h^1((\bL_{U\lalp/S}^{\geq -1})^\vee)$ denote the coarse intrinsic normal sheaf, which we think of as a sheaf stack. 
For each pair of indices $\alpha, \beta$, consider the diagram
\begin{align} \label{loc 5.2}
\xymatrix{
    \fn_{U\lbet/S}|_{U\lab} \ar[r] \ar[d]_-{h^1(\phi\lbet^\vee)} &  \fn_{U\lalp/S}|_{U\lab} \ar[d]^-{h^1(\phi\lalp^\vee)}\\
    \Ob_{\phi\lbet}|_{U\lab} \ar[r]_-{\psi\lab^{-1}} &  \Ob_{\phi\lalp}|_{U\lab}
}
\end{align}
where the top horizontal arrow is the natural isomorphism. The restriction of \eqref{loc 5.2} to $V_\lambda \to U\lab$ is commutative, since $\psi\lab^{-1}|_{V_\lambda} = h^1(\eta_{\alpha\beta\lambda}^\vee)$ and the diagram \eqref{loc 5.1} commutes. But $\{V_\lambda \to U\lab\}_\lambda$ give an \'{e}tale cover of $U\lab$ and therefore \eqref{loc 5.2} commutes.

We deduce that the closed embeddings 
$$h^1(\phi\lalp^\vee) \colon \fn_{U\lalp/S} \lr \Ob_{\phi\lalp}$$
glue to a global closed embedding
$$j_\phi \colon \fn_{X/S} \lr \Ob_{\phi}$$
of sheaf stacks over $X$ where $\fn_{X/S}=h^1((\bL_{X/S}^{\geq -1})^\vee)$. 
By \cite{BehFan}, the coarse intrinsic normal cone stack $\fc_{X/S}$ (resp. $\fc_{U_\alpha/S}$) is a closed substack of the intrinsic normal sheaf stack $\fn_{X/S}$ (resp. $\fn_{U_\alpha/S}$) and hence $\fc_\phi = j_\phi(\fc_{X/S})$ (resp. $\fc_{\phi_\alpha} = h^1(\phi\lalp^\vee)(\fc_{U_\alpha/S})$) is a closed substack of $\Ob_\phi$ (resp. $\Ob_{\phi_\alpha}$). We have thus established the following theorem.

\begin{thm} \label{thm 3.4}
Let $X \to S$ be a morphism, where $X$ is a Deligne-Mumford  stack of finite presentation and $S$ a smooth Artin stack of pure dimension. Let $\phi$ be an almost perfect obstruction theory on $X \to S$. Then there exists a closed cone substack $\fc_\phi \sub \Ob_\phi$ such that for any \'{e}tale $U\lalp \to X$ we have $\fc_\phi |_{U\lalp} = \fc_{\phi\lalp}$ via the obvious natural identifications.
\end{thm}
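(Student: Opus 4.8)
The plan is to assemble the statement from the local pieces already in place, so the work is almost entirely about checking that the local closed embeddings $h^1(\phi_\alpha^\vee)\colon \fn_{U_\alpha/S}\to \Ob_{\phi_\alpha}$ are compatible with the gluing data and that the intrinsic normal cone is preserved. First I would recall, following \cite{BehFan}, that for a perfect obstruction theory $\phi_\alpha\colon E_\alpha\to \bL_{U_\alpha/S}^{\geq -1}$ the induced map $h^1(\phi_\alpha^\vee)\colon \fn_{U_\alpha/S}=h^1((\bL_{U_\alpha/S}^{\geq-1})^\vee)\to h^1(E_\alpha^\vee)=\Ob_{\phi_\alpha}$ is a closed embedding of sheaf stacks over $U_\alpha$, and that the coarse intrinsic normal cone $\fc_{U_\alpha/S}$ is a closed substack of $\fn_{U_\alpha/S}$; hence $\fc_{\phi_\alpha}:=h^1(\phi_\alpha^\vee)(\fc_{U_\alpha/S})$ is a well-defined closed cone substack of $\Ob_{\phi_\alpha}$.

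Next I would carry out the gluing. The commutative square \eqref{loc 5.2} — already verified in the text to commute, by \'etale descent from the $V_\lambda$ where it commutes because $\psi_{\alpha\beta}^{-1}|_{V_\lambda}=h^1(\eta_{\alpha\beta\lambda}^\vee)$ and \eqref{loc 5.1} commutes — says exactly that the isomorphism $\psi_{\alpha\beta}\colon \Ob_{\phi_\alpha}|_{U_{\alpha\beta}}\to \Ob_{\phi_\beta}|_{U_{\alpha\beta}}$ intertwines $h^1(\phi_\alpha^\vee)$ and $h^1(\phi_\beta^\vee)$ under the canonical identification $\fn_{U_\alpha/S}|_{U_{\alpha\beta}}\cong\fn_{U_\beta/S}|_{U_{\alpha\beta}}$ (which comes from the fact that $\fn_{-/S}$ is intrinsic and the maps $U_\alpha,U_\beta\to X$ are \'etale, so both restrict canonically to $\fn_{U_{\alpha\beta}/S}$). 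Since the $\psi_{\alpha\beta}$ already satisfy the cocycle condition by part (1) of Definition~\ref{almost perfect obs th}, and the canonical identifications of the $\fn_{U_\alpha/S}$ obviously do too, the local closed embeddings $h^1(\phi_\alpha^\vee)$ descend to a single closed embedding $j_\phi\colon \fn_{X/S}\to \Ob_\phi$ of sheaf stacks over $X$. Here I would invoke the formalism of \S 2: $\Ob_\phi$ is the sheaf stack of the coherent sheaf glued from $\{h^1(E_\alpha^\vee),\psi_{\alpha\beta}\}$, and a closed substack of a sheaf stack is determined by its restrictions to an \'etale cover together with agreement on overlaps, exactly as for ordinary stacks.

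Finally I would set $\fc_\phi:=j_\phi(\fc_{X/S})$. Because $\fc_{X/S}$ is a closed substack of $\fn_{X/S}$ and $j_\phi$ is a closed embedding, $\fc_\phi$ is a closed substack of $\Ob_\phi$; it is a cone substack because each $\fc_{\phi_\alpha}$ is a cone (the scaling action on $\Ob_{\phi_\alpha}$ is compatible with $\psi_{\alpha\beta}$, so there is a global scaling action on $\Ob_\phi$ under which $\fc_\phi$ is invariant). The identity $\fc_\phi|_{U_\alpha}=\fc_{\phi_\alpha}$ is then immediate from the construction together with the base-change compatibility $\fc_{X/S}|_{U_\alpha}=\fc_{U_\alpha/S}$, which again holds because $U_\alpha\to X$ is \'etale and the intrinsic normal cone is \'etale-local (cf. \cite{BehFan}).

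The only genuinely delicate point is the one the authors have already isolated: that the square \eqref{loc 5.2} commutes, i.e.\ that the gluing isomorphisms $\psi_{\alpha\beta}$ of the obstruction sheaves are \emph{a priori} only postulated in part (1), and one must use part (2) — the existence of the local quasi-isomorphisms $\eta_{\alpha\beta\lambda}$ over $\bL^{\geq-1}$ with $h^1(\eta_{\alpha\beta\lambda}^\vee)=\psi_{\alpha\beta}^{-1}$ — to see that $\psi_{\alpha\beta}$ is compatible with the embeddings of the normal sheaves, and this compatibility can only be checked \'etale-locally on the $V_\lambda$ and then propagated by descent (the $\eta_{\alpha\beta\lambda}$ themselves need not be compatible, but the resulting square of sheaf maps is, which is all that is needed). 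Everything else is bookkeeping with the \S 2 formalism for sheaf stacks and the standard \'etale-local theory of the intrinsic normal cone from \cite{BehFan}.
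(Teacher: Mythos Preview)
Your proposal is correct and follows essentially the same approach as the paper: the authors' argument, given in the paragraphs immediately preceding the theorem, is precisely to verify that the square \eqref{loc 5.2} commutes by \'etale descent from the $V_\lambda$ (using $\psi_{\alpha\beta}^{-1}|_{V_\lambda}=h^1(\eta_{\alpha\beta\lambda}^\vee)$ and the commutativity of \eqref{loc 5.1}), then glue the local closed embeddings $h^1(\phi_\alpha^\vee)$ to obtain $j_\phi\colon \fn_{X/S}\to \Ob_\phi$, and finally set $\fc_\phi=j_\phi(\fc_{X/S})$. You have added some helpful elaboration on why the cocycle conditions and cone structure pass to the glued object, but the skeleton of the proof is identical.
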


\section{Virtual Structure Sheaves for Almost Perfect Obstruction Theories} \label{sec 5}

Let $X \to S$ be a morphism, where $X$ is a Deligne-Mumford  stack of finite type and $S$ a smooth Artin stack of pure dimension, together with an almost perfect obstruction theory $\phi$. In this section, we combine the results of the two preceding sections to construct a virtual structure sheaf $[\oO_X\virt] \in K_0(X)$ and show that it is deformation invariant.

\subsection{Virtual structure sheaves} We first recall the definition of a virtual structure sheaf when $U \to S$ has a perfect obstruction theory $\phi \colon E \to \bL_{U/S}^{\geq -1}$ (cf. Definition~\ref{Perf obs th}).

Let us assume that $E$ has a global resolution 
\begin{align*}
E = [ E^{-1} \lr E^0 ],
\end{align*}
where $E^{-1}, E^0$ are locally free sheaves on $U$. We denote $E_i = \left( E^{-i} \right)^\vee$ for $i=0,1$.

Then $\eE = h^1/h^0(E^\vee) = [ E_1 / E_0 ]$ so that we have the quotient morphism $E_1 \to \eE$. By \cite[Proposition 2.2]{BehFun}, the diagram
\begin{align} \label{curvilinear}
\xymatrix{
C_1 \ar[r] \ar[d] & E_1 \ar[d] \\
\cC_{U/S} \ar[r] \ar[d] & \eE \ar[d] \\
\fc_{U/S} \ar[r] & \Ob_\phi,
}
\end{align}
is Cartesian and gives rise to the obstruction cone $C_1$. The virtual structure sheaf associated to the perfect obstruction theory $\phi$ is defined as
\begin{align*} 
[\oO_U\virt, \phi] & = [\oO_U \otimes^L_{\oO_{E_1}} \oO_{C_1}] = [K(E_1) \otimes_{\oO_{E_1}} \oO_{C_1} ]  \\
& =\sum_i (-1)^i [ \Tor^{\oO_{E_1}}_{i}(\oO_U, \oO_{C_1}) ] \in K_0(U)
\end{align*}
where $K(E_1)$ denotes the Koszul resolution of $\oO_U$. 

Using our definition of Gysin map for the sheaf stack $\Ob_\phi$, we see that the virtual structure sheaf is equal to
\begin{align*} 
[\oO_U\virt, \phi] = 0_{E_1}^! [\oO_{C_1}] = 0_{\Ob_\phi}^! [\oO_{\fc_{U/S}}].
\end{align*}
So the $K$-theoretic Gysin maps for sheaf stacks constructed in \S 2 enable us to drop the requirement in \cite{BehFan} and \cite{yplee} that the perfect obstruction theory $E$ should admit a global resolution by locally free sheaves. \\

In the case of $X \to S$ with an \emph{almost perfect obstruction theory} $\phi$, the closed substack $\fc_\phi \sub \Ob_\phi$ gives rise to a coherent sheaf $\oO_{\fc_\phi} \in \Coh(\Ob_\phi)$ (cf. Remark~\ref{structure sheaf of closed substack}). We may thus give the following definition using the Gysin map $0_{\Ob_\phi}^!$, in analogy with the above.

\begin{defi}
The \emph{virtual structure sheaf} of $X \to S$ associated to the almost perfect obstruction theory $\phi$ is defined as
\begin{align*}
   [\oO_X\virt, \phi] = 0_{\Ob_\phi}^!  [\oO_{\fc_\phi}]  \in K_0(X).
\end{align*}
\end{defi}
We will often denote the virtual structure sheaf $ [\oO_X\virt, \phi]$ by $[\oO_X\virt]$ for simplicity.

\subsection{Deformation invariance} \label{deformation invariance} Suppose that we have a Cartesian diagram
\begin{align*}
\xymatrix{
    Y \ar[r]^-{u} \ar[d] & X \ar[d] \\
    Z \ar[r]_-{v} & W
    }
\end{align*}
where $Z,W$ are smooth varieties and $v$ is a regular embedding. Let $\phi$ be an almost perfect obstruction theory on $X \to S$, given by perfect obstruction theories $\phi\lalp \colon E\lalp \to \bL_{U\lalp / S}^{\geq -1}$ on an \'{e}tale cover $\lbrace U\lalp \to X \rbrace_{\alpha \in A}$ of $X$. Let
\begin{align*}
    \xymatrix{
    V\lalp \ar[r]^-{u\lalp} \ar[d] & U\lalp \ar[d] \\
    Y \ar[r]_-{u} & X
    }
\end{align*}
be Cartesian. Suppose now that we have an almost perfect obstruction theory on $Y \to S$ given by perfect obstruction theories $\phi'\lalp \colon E\lalp' \to \bL_{V\lalp/S}^{\geq -1}$ together with commutative diagrams
\begin{align} \label{compat obs th}
\xymatrix{
    E\lalp|_{V\lalp} \ar[r]^-{g\lalp} \ar[d]_-{\phi\lalp|_{V\lalp}} & E\lalp' \ar[d]^-{\phi\lalp'} \ar[r] & N_{Z/W}^\vee |_{V\lalp}[1] \ar@{=}[d] \ar[r] &\\
    \bL_{U\lalp/S}^{\geq -1}|_{V\lalp} \ar[r] & \bL_{V\lalp / S}^{\geq -1} \ar[r] & \bL_{V\lalp/U\lalp}^{\geq -1} \ar[r] &
    }
\end{align}
of distinguished triangles which are compatible with the diagrams \eqref{loc 5.1} for $\phi$ and $\phi'$ such that we have exact sequences 
\begin{align}
    N_{Z/W}|_{V\lalp} \lr \Ob_{\phi'\lalp} \xrightarrow{h^1(g\lalp^\vee)} \Ob_{\phi\lalp}|_{V\lalp} \lr 0.
\end{align}
that glue to a sequence 
\begin{align}
    N_{Z/W}|_{Y} \lr \Ob_{\phi'} \lr \Ob_{\phi}|_{Y} \lr 0.
\end{align}

\begin{thm}
$[\oO_Y\virt, \phi'] = v^! [\oO_X\virt, \phi] \in K_0(Y)$.
\end{thm}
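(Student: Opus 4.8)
The plan is to reduce the global statement to a local one on the \'etale cover and then invoke the known deformation invariance for honest perfect obstruction theories via the deformation-to-the-normal-cone argument, but carried out at the level of the sheaf stacks $\Ob_\phi$ and $\Ob_{\phi'}$ rather than vector bundle stacks. First I would recall that by Theorem~\ref{thm 3.4} we have closed cone substacks $\fc_\phi \subset \Ob_\phi$ and $\fc_{\phi'} \subset \Ob_{\phi'}$ restricting to $\fc_{\phi\lalp} \subset \Ob_{\phi\lalp}$ and $\fc_{\phi'\lalp} \subset \Ob_{\phi'\lalp}$ on the cover. The exact sequence $N_{Z/W}|_Y \to \Ob_{\phi'} \to \Ob_\phi|_Y \to 0$ exhibits $\Ob_\phi|_Y$ as a quotient sheaf stack of $\Ob_{\phi'}$, and the compatibility diagrams \eqref{compat obs th} of distinguished triangles say precisely that, locally on $V\lalp$, the perfect obstruction theory $E\lalp'$ is the one obtained from $E\lalp|_{V\lalp}$ by the standard recipe for restricting along a regular embedding $v\colon Z \to W$. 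Hence on each chart the classical Behrend-Fantechi / Lee argument applies: the intrinsic normal cone $\fc_{V\lalp/S}$ sits inside $\fc_{U\lalp/S}|_{V\lalp} \times_{W} Z$ compatibly, and deformation to the normal cone $C_{Z/W}$ identifies $v^!$ of the cone class with the cone class for the restricted obstruction theory.

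Second, I would set up the $K$-theoretic Gysin computation. Choose an affine \'etale cover $\{Q\lalp\}$ of $X$ refining the data, pull it back to $\{Q'\lalp\}$ on $Y$, and pick local charts realizing $E\lalp$ (resp.\ $E\lalp'$) as genuine vector bundles $E_{1,\alpha}$ (resp.\ $E_{1,\alpha}'$) surjecting onto $\Ob_{\phi\lalp}$ (resp.\ $\Ob_{\phi'\lalp}$); the surjection $\Ob_{\phi'}|_Y \twoheadleftarrow$ and the kernel $N_{Z/W}$ let me take the chart $E_{1,\alpha}' = E_{1,\alpha}|_{V\lalp} \oplus N_{Z/W}|_{V\lalp}$ compatibly on overlaps after passing to common roofs, using Lemma~\ref{independence from roof} and Construction~\ref{construction} to see this is independent of choices. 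Then $[\oO_X\virt,\phi]|_{V\lalp} = 0^!_{E_{1,\alpha}}[\oO_{C_{1,\alpha}}]$ and $[\oO_Y\virt,\phi']|_{V\lalp} = 0^!_{E_{1,\alpha}'}[\oO_{C_{1,\alpha}'}]$ by the unwinding of the Gysin map in \S 4, and I must show $0^!_{E_{1,\alpha}'}[\oO_{C_{1,\alpha}'}] = v^!\, 0^!_{E_{1,\alpha}}[\oO_{C_{1,\alpha}}]$ as classes in $K_0(V\lalp)$, then check these local identities glue, which follows from the gluing of the sequences of obstruction sheaves and the functoriality built into Construction~\ref{construction}.

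The local identity is the heart of the matter and I would prove it by deformation to the normal cone exactly as in \cite[\S 7]{BehFan} / \cite{yplee}, but read through the sheaf-stack formalism of \S 2. Form $M^\circ_{Z/W}$, the deformation space, with general fiber $W$ and special fiber $N_{Z/W}$; pulling back $X \to W$ gives a family $\tX \to \bA^1$ interpolating between (a neighbourhood of) $X$ and the total space of $N_{Z/W}|_Y \to Y$, the intrinsic normal cone $\fc_{\tX/S\times\bA^1}$ is flat over $\bA^1$, and its fibers are $\fc_{X/S}$ at $t\ne 0$ and, at $t=0$, the cone $C_{\fc_{X/S}|_Y N_{Z/W}}$ whose class equals that of $\fc_{Y/S}$ after the extra $N_{Z/W}$-direction is accounted for. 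Applying the $K$-theoretic Gysin map $0^!_{\Ob_\phi}$ fiberwise — which is legitimate because $K$-theoretic Gysin maps for the relevant vector bundle charts commute with the specialization/base-change maps on $K_0$, by the usual flatness and $\Tor$-independence arguments — and using that $v^!$ on $K_0$ is itself defined by specialization to $N_{Z/W}$ and is compatible with the quotient $\Ob_{\phi'}\to\Ob_\phi|_Y$, yields the desired equality. The main obstacle I anticipate is precisely the rigorous bookkeeping of this fiberwise Gysin argument in the sheaf-stack setting: one must verify that the cone $\fc_\phi$, its deformation over $\bA^1$, and the splitting of the charts $E_{1,\alpha}' \cong E_{1,\alpha}|_{V\lalp}\oplus N_{Z/W}|_{V\lalp}$ are all compatible with the comparison isomorphisms $g\lab$ of Construction~\ref{construction} and with the descent data $\psi\lab$, so that the locally computed classes actually assemble to the global equality $[\oO_Y\virt,\phi'] = v^![\oO_X\virt,\phi]$; modulo that, everything reduces to the classical perfect-obstruction-theory case applied chart by chart.
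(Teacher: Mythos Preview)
Your overall strategy---deformation to the normal cone, carried through the sheaf-stack Gysin formalism---is the right one and is what the paper does. But two points deserve comment.

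First, the technical setup differs. You form the single deformation space $M^\circ_{Z/W}$ and pull $X\to W$ back over it. The paper instead uses the \emph{double} deformation space of Kim--Kresch--Pantev: first deform $X$ to its intrinsic normal cone $\cC_X$ via $\mM_X^\circ\to\bP^1$, then deform $Y\times\bP^1$ inside $\mM_X^\circ$ to its normal cone, obtaining $\wW\to\bP^1\times\bP^1$. This yields directly the key identity $[\oO_{\fc_Y}]=[\oO_{\fc_{Y/\cC_X}}]$ in $K_0$ of a single global sheaf stack $\kK$ on $Y\times\bP^1$, built by gluing $h^1(c(\kappa\lalp)^\vee)$ where $\kappa\lalp=(T\cdot\id,\,U\cdot g\lalp)$ interpolates between $E\lalp|_{V\lalp}$ and $E'\lalp$. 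The almost perfect obstruction theory data and the compatibility diagrams \eqref{compat obs th} are exactly what makes these local embeddings glue. The paper then finishes with a separately stated commutation lemma, $0^!_{\Ob_\phi|_Y}\circ v^! = v^!\circ 0^!_{\Ob_\phi}$, proved by a spectral-sequence argument for the double complex $K(E)\otimes\aA_E\otimes\oO_Z^W|_E$ on each chart.

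Second, and more importantly, the step you flag as ``the main obstacle'' is not a bookkeeping nuisance but the actual content of the proof, and your proposed resolution (``check these local identities glue'') is not sound as written. Equalities in $K_0(V\lalp)$ do not descend: $K_0$ is not a sheaf, so knowing $0^!_{E'_{1,\alpha}}[\oO_{C'_{1,\alpha}}]=v^!\,0^!_{E_{1,\alpha}}[\oO_{C_{1,\alpha}}]$ on each chart, even with compatible restriction maps, does not by itself yield an identity in $K_0(Y)$. What you must glue are the \emph{underlying sheaves} (cones, Koszul homologies, specializations), so that the identity is between alternating sums of classes of honest global coherent sheaves. This is precisely what the construction of $\kK$ and the global equality $[\oO_{\fc_Y}]=[\oO_{\fc_{Y/\cC_X}}]\in K_0(\kK)$ accomplishes. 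Your single-deformation approach can in principle be made to work along the same lines, but you would need to globalize your interpolating family and cone over $Y\times\bA^1$ into a sheaf stack analogous to $\kK$, and then isolate the commutation of $v^!$ with $0^!_{\Ob_\phi}$ as a separate lemma; without that, the argument has a genuine gap.
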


Here the Gysin map $v^! \colon K_0(X) \to K_0(Y)$ is defined by the formula
\begin{align}
    v^! [\aA] = [\oO_Z^W |_X \otimes_{\oO_X} \aA] \in K_0(Y)
\end{align}
where we fix $\oO_Z^W$ to be a finite locally free resolution of $v_* \oO_Z$. By \cite{yplee}, $v^!$ also equals the composition
\begin{align}
   K_0(X) \xrightarrow{\sigma_u} K_0(C_{Y/X}) \xrightarrow{0_{N_{Z/W}}^!} K_0(Y)
\end{align}
where $\sigma_u$ is specialization to the normal cone and $0_{N_{Z/W}}^!$ is the Gysin map induced from the Cartesian diagram
\begin{align*}
\xymatrix{
    Y \ar[d] \ar[r] & C_{Y/X} \ar[d] \\
    Z \ar[r] & N_{Z/W}
    }
\end{align*}

\begin{proof}[Proof of Theorem 4.3]
This is a standard argument in the context of functoriality of virtual cycles in intersection theory, modified appropriately in the $K$-theoretic setting. We give an outline, leaving the details to the reader.

Let $\mM_X^\circ \to \bP^1$ be the deformation of $X$ to its intrinsic normal cone stack $\cC_X$. Then we define $\wW = \mM_{Y \times \bP^1 / \mM_X^\circ}^\circ$ to be the double deformation space given by the deformation of $Y \times \bP^1$ inside $\mM_X^\circ$ to its normal cone $\cC_{Y \times \bP^1 / \mM_X^\circ}$. We have a morphism $\wW \to \bP^1 \times \bP^1$. We denote the two projections $\wW \to \bP^1$ by $\pi_1$ and $\pi_2$ respectively. 

The fiber over $(1,0)$ is $\cC_Y$ while the flat specialization at the point $(0,0)$ along $\lbrace 0 \rbrace \times \bP^1$ is $\cC_{Y / \cC_X}$. We have $[\bC_0] = [\bC_1] \in K_0(\bP^1)$ and thus using the projection $\pi_1$
\begin{align} \label{loc 5.3}
    [\oO_{\wW} \otimes_{\oO_{\bP^1}}^L \bC_0] = [\oO_{\wW} \otimes_{\oO_{\bP^1}}^L \bC_1].
\end{align}

$\pi_1$ is flat over $\bP^1 - \lbrace 0 \rbrace$ and hence the right hand side is equal to
\begin{align} \label{loc 5.4}
    [\oO_{\wW} \otimes_{\oO_{\bP^1}}^L \bC_1] =  [\oO_{\pi_1^{-1}(1)}] = [\oO_{\mM_Y^\circ}].
\end{align}

Let us denote by $D$ the Cartier divisor $\cC_{Y\times \bP^1 / \mM_X^\circ}$ inside $\wW$. We then have an operation of ``intersecting with $D$" 
\begin{align*}
    D \cdot (\bullet) \colon K_0(\wW) \lr K_0(D)
\end{align*}
defined by the formula
\begin{align*}
    D \cdot [\aA] = [\oO_D \otimes^L_{\oO_\wW} \aA] = [D \otimes_{\oO_\wW} \aA]
\end{align*}
where by abuse of notation we denote $D = [ \oO_{\wW}(-D) \to \oO_{\wW}]$. 

Since $\pi_2$ is flat and $D = \pi_2^{-1}(0)$, this has the property that for any closed substack
$$\zZ^\circ \sub \wW^\circ := \pi_2^{-1}\left( \bP^1 - \lbrace 0 \rbrace \right)$$
flat over $\bP^1 - \lbrace 0 \rbrace$ and any class $\aA \in K_0(\wW)$ such that $\aA|_{\wW^\circ} = [\oO_\zZ^\circ] \in K_0(\wW^\circ)$ we have
$$D \cdot \aA = [\oO_{\zZ_0^{\mathrm{fl}}}]$$
where $\zZ_0^{\mathrm{fl}}$ is the flat specialization of $\zZ^\circ$.

Since $\mM_Y^\circ$ is flat over $\bP^1$ via the projection $\pi_2$ with fiber $\cC_Y$ over $0$ we have 
\begin{align} \label{loc 5.5}
   D \cdot [\oO_{\mM_Y^\circ}] = [\oO_{\cC_Y}].
\end{align}

Moreover
\begin{align} \label{loc 5.6}
    D \cdot [\oO_{\wW} \otimes_{\oO_{\bP^1}}^L \bC_0] = [\oO_{\cC_{Y/\cC_X}}]
\end{align} 
since $[\oO_{\wW} \otimes_{\oO_{\bP^1}}^L \bC_0] \in K_0(\wW)$ restricts to $[\oO_{\wW^\circ \cap \pi_1^{-1}(0)}] \in K_0(\wW^\circ)$ and $\wW^\circ\cap \pi_1^{-1}(0)$ specializes to $\cC_{Y / \cC_X}$.

Combining \eqref{loc 5.3}, \eqref{loc 5.4}, \eqref{loc 5.5} and \eqref{loc 5.6}, we obtain
\begin{align}
   [\oO_{\cC_Y}] =  [\oO_{\cC_{Y/\cC_X}}] \in K_0(\cC_{Y\times \bP^1 / \mM_X^\circ})
\end{align}

Since $\cC_{Y \times \bP^1/\mM_X^\circ}$ is a closed substack of $\nN_{Y \times \bP^1 / \mM_X^\circ}$ the equality holds in $K_0(\nN_{Y \times \bP^1 / \mM_X^\circ})$ as well.

Following \cite{KimKreschPant}, for each index $\alpha$ we consider the commutative diagram of distinguished triangles on $V\lalp \times \bP^1$
\begin{align}
\xymatrix{
E\lalp|_{V\lalp}(-1) \ar[r]^-{\kappa\lalp} \ar[d] & E\lalp|_{V\lalp} \oplus E\lalp' \ar[r] \ar[d] & c(\kappa\lalp) \ar[r] \ar[d] &\\
\bL_{U\lalp/S}^{\geq -1} |_{V\lalp}(-1) \ar[r]_-{\lambda\lalp} & \bL_{U\lalp/S}^{\geq -1} |_{V\lalp} \oplus \bL_{V\lalp/S}^{\geq -1} \ar[r] & c(\lambda\lalp) \ar[r] &
}
\end{align}
where $\kappa\lalp = (T \cdot \id, U \cdot g\lalp)$ with $T,U$ coordinates on $\bP^1$.

Clearly $\lambda\lalp$ is the restriction to $V\lalp$ of a global morphism $\lambda$. By \cite{KimKreschPant}, we have that $h^1/h^0(c(\lambda)^\vee) = \nN_{Y \times \bP^1 / \mM_X^\circ}$.

By the properties of almost perfect obstruction theories and the compatibility diagrams \eqref{compat obs th}, the closed embeddings
$$h^1(c(\lambda\lalp)^\vee) \lr h^1(c(\kappa\lalp)^\vee)$$
glue to a closed embedding of sheaf stacks on $Y \times \bP^1$
$$\fn_{Y\times \bP^1 / \mM_X^\circ} \lr \kK$$

The same argument as above works at the level of coarse moduli sheaves, where flatness stands for exactness of the pullback functor. Thus we deduce the equality
\begin{align} \label{loc 6.11}
    [\oO_{\fc_Y}] =  [\oO_{\fc_{Y/\cC_X}}] \in K_0(\kK)
\end{align}

The fiber of $\kK$ over $\lbrace 0 \rbrace \in \bP^1$ is $\Ob_{\phi}|_Y \oplus N_{Z/W}$ while the fiber over $\lbrace 1 \rbrace \in \bP^1$ is $\Ob_{\phi'}$. Therefore, we obtain by \eqref{loc 6.11}
\begin{align*}
    [\oO_Y\virt, \phi'] = 0_{\Ob_{\phi'}}^! [\oO_{\fc_Y}] = 0_{\Ob_{\phi}|_Y \oplus N_{Z/W}|_Y}^! [\oO_{\fc_{Y/\cC_X}}]
\end{align*}
Now, since the usual properties of Gysin maps hold by working on local charts of the corresponding sheaf stacks, we have
\begin{align*}
    0_{\Ob_{\phi}|_Y \oplus N_{Z/W}}^! [\oO_{\fc_{Y/\cC_X}}] = 0_{\Ob_\phi |_Y}^! 0_{N_{Z/W}|_Y}^! [\oO_{\fc_{Y/\cC_X}}] = 0_{\Ob_\phi |_Y}^! v^! [\oO_{\fc_X}]
\end{align*}
By the next proposition, we have $0_{\Ob_\phi|_Y}^! v^! = v^! \Ob_\phi^!$, which implies the desired equality.
\end{proof}

\begin{prop}
$0_{\Ob_\phi|_Y}^! v^! = v^! \Ob_\phi^! \colon K_0(\Ob_\phi) \to K_0(Y)$.
\end{prop}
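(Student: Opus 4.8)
The plan is to prove the identity chart by chart on the sheaf stacks and then to patch the local computations together, in the same local‑to‑global spirit as the constructions of \S\ref{sec2}. First I would record what $v^!$ means for the sheaf stack $\Ob_\phi$: fix a finite locally free resolution $\oO_Z^W$ of $v_*\oO_Z$; for a local chart $Q = (U, \rho, E, r_E)$ of $\Ob_\phi$ the complex $\oO_Z^W$ pulls back along $E \to U \to X \to W$ to a bounded complex $\oO_Z^W|_E$ of locally free sheaves on $E$, and $v^! \colon K_0(\Ob_\phi) \to K_0(\Ob_\phi|_Y)$ sends $[\aA]$ to the class of $\oO_Z^W|_E \otimes_{\oO_E} \aA_Q$ on the restricted chart. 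Its basic properties---independence of the resolution and of the chart, compatibility with morphisms of local charts and with restriction to \'etale opens, and compatibility with $0_{\Ob_\phi}^!$---are established by the same chart‑by‑chart arguments used in \S\ref{sec2} for the Gysin map itself, and I would dispatch this preliminary bookkeeping quickly. Since both sides of the proposition are group homomorphisms, it suffices to test the identity on a class $[\aA]$ with $\aA$ a coherent sheaf on $\Ob_\phi$.

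Next I would choose an \'etale cover $\{U\lalp \to X\}$ with affine local charts $Q\lalp = (U\lalp, \rho\lalp, E\lalp, r_{E\lalp})$ covering $\Ob_\phi$, and write $V\lalp = U\lalp \times_X Y$ as in \S\ref{deformation invariance}, so that the restricted charts cover $\Ob_\phi|_Y$. Since $u \colon Y \to X$ is the base change of the regular embedding $v$, so is $V\lalp \to U\lalp$, and base changing it along the smooth projection $E\lalp \to U\lalp$ shows that the total space $E\lalp|_{V\lalp} \hookrightarrow E\lalp$ is again a regular embedding. Over $V\lalp$ one then has two compatible towers of $\oO$‑algebra maps, $\oO_{E\lalp} \twoheadrightarrow \oO_{U\lalp} \twoheadrightarrow \oO_{V\lalp}$ and $\oO_{E\lalp} \to \oO_{E\lalp|_{V\lalp}} \twoheadrightarrow \oO_{V\lalp}$, inducing the same map $\oO_{E\lalp} \to \oO_{V\lalp}$. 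Unwinding the definitions, $v^!(0_{\Ob_\phi}^![\aA])$ and $0_{\Ob_\phi|_Y}^!(v^![\aA])$ both restrict over $V\lalp$ to the class of a single derived tensor product, bracketed in two ways:
\begin{align*}
\oO_{V\lalp} \otimes^{\dL}_{\oO_{U\lalp}} \bigl( \oO_{U\lalp} \otimes^{\dL}_{\oO_{E\lalp}} \aA_{Q\lalp} \bigr) \ \simeq\ \oO_{V\lalp} \otimes^{\dL}_{\oO_{E\lalp}} \aA_{Q\lalp} \ \simeq\ \oO_{V\lalp} \otimes^{\dL}_{\oO_{E\lalp|_{V\lalp}}} \bigl( \oO_{E\lalp|_{V\lalp}} \otimes^{\dL}_{\oO_{E\lalp}} \aA_{Q\lalp} \bigr).
\end{align*}
In the leftmost expression the inner factor is the Koszul homology computing $0_{\Ob_\phi}^!$ and the outer factor is $v^!$ on $K_0(U\lalp)$; in the rightmost expression the inner factor is $v^!$ on the chart over $V\lalp$ and the outer factor is the Koszul homology computing $0_{\Ob_\phi|_Y}^!$; the three expressions agree by associativity of the derived tensor product in the two towers of ring maps. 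So the two sides of the proposition agree over each $V\lalp$.

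The remaining, and only substantive, point is to promote these local equalities to an equality of global classes in $K_0(Y)$, since $K_0$ of a Deligne‑Mumford stack is not an \'etale sheaf. Recall that $0_{\Ob_\phi}^![\aA] = \sum_i (-1)^i [\hH^i_K(\aA)]$, with $\hH^i_K(\aA) \in \Coh(X)$ glued from the $\hH^i_{Q\lalp}(\aA) = \Tor^{\oO_{E\lalp}}_i(\oO_{U\lalp}, \aA_{Q\lalp})$ by the canonical comparison isomorphisms of Construction~\ref{construction}, and similarly $0_{\Ob_\phi|_Y}^!(v^![\aA]) = \sum_j (-1)^j [\hH^j_K(v^!\aA)]$, where $v^!\aA$ is the pullback of $\aA$ to $\Ob_\phi|_Y$ (a bounded complex of coherent sheaves) and $\hH^j_K$ now denotes its hyper‑Koszul‑homology. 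Because $v^!$ is tensoring with a bounded complex of locally free sheaves pulled back from $X$, it commutes with the \'etale pullbacks, the Koszul complexes $K(E\lalp)$, and the roof comparison maps entering Construction~\ref{construction}; hence it carries the descent data for $\{\hH^i_K(\aA)\}$ to descent data for $\{\hH^j_K(v^!\aA)\}$ at the level of $K_0$. Concretely, over $V\lalp$ the two hyper‑$\Tor$ spectral sequences attached to the bracketings above yield, in $K_0(V\lalp)$,
\begin{align*}
\sum_i (-1)^i\, v^!\bigl[\hH^i_{Q\lalp}(\aA)\bigr]\big|_{V\lalp} \ =\ \sum_n (-1)^n \bigl[ \Tor^{\oO_{E\lalp}}_n(\oO_{V\lalp}, \aA_{Q\lalp}) \bigr] \ =\ \sum_j (-1)^j\, \bigl[\hH^j_{Q\lalp|_{V\lalp}}(v^!\aA)\bigr],
\end{align*}
and these identities are compatible with the comparison isomorphisms, so they glue to $v^!(0_{\Ob_\phi}^![\aA]) = \sum_j (-1)^j [\hH^j_K(v^!\aA)] = 0_{\Ob_\phi|_Y}^!(v^![\aA])$ in $K_0(Y)$.

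I expect the gluing step of the last paragraph to be the main obstacle: one must check that $v^!$ transports the descent data of Construction~\ref{construction} for $\aA$ to that for $v^!\aA$, and that the Euler‑characteristic identities produced by the two hyper‑$\Tor$ spectral sequences respect these identifications. This is bookkeeping of the same nature as the well‑definedness proofs in \S\ref{sec2}, and I would organize it by reducing---via common roofs, as in the proof of Lemma~\ref{independence from roof}---to a single compatible system of charts on which the comparison maps become identities; the local associativity computation of the second paragraph is by contrast routine. (Alternatively one could deduce the statement from the factorization of $v^!$ through specialization to the normal cone followed by $0_{N_{Z/W}}^!$ recalled before the proposition, reducing to the case where $v$ is a zero section, but the direct argument above seems more transparent.)
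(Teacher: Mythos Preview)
Your proposal is correct and follows essentially the same approach as the paper. Both arguments introduce the intermediate total homology classes $\sum_n (-1)^n [\Tor^{\oO_{E\lalp}}_n(\oO_{V\lalp}, \aA_{Q\lalp})]$ (the paper calls the glued sheaves $\bB^\ell$, obtained from $\hH^\ell(K(E)\otimes \aA_E \otimes \oO_Z^W|_E)$ via Construction~\ref{construction}), and then identify each side of the proposition with this intermediate class using the two spectral sequences of the double complex $K(E)\otimes \aA_E \otimes \oO_Z^W|_E$---which is exactly your associativity-of-derived-tensor-products computation phrased in spectral-sequence language.
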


\begin{proof}
For any coherent sheaf $\aA$ on $\Ob_\phi$, we have
\begin{align} \label{loc 6.14}
    0_{\Ob_\phi|_Y}^! v^! [\aA] = \sum (-1)^{i+j} \left[ \hH_K^i \left( \hH^j \left( \aA \otimes \oO_Z^W \right) \right) \right]
\end{align}
and
\begin{align} \label{loc 6.15}
    v^! \Ob_\phi^! [\aA] = \sum (-1)^{i+j} \left[ \hH^i \left( \hH_K^j ( \aA ) \otimes \oO_Z^W \right) \right]
\end{align}

For any local chart $Q = (U, \rho, E, r)$ we have a homology sheaf $$\hH^\ell ( K(E) \otimes \aA_E \otimes \oO_Z^W|_E  )$$ on $U \times_X Y$. By an identical argument as in Construction~\ref{construction}, these glue to define sheaves $\bB^\ell$ on $Y$. 

Considering the spectral sequence for the double complex $K(E) \otimes \left( \aA_E \otimes \oO_Z^W|_E  \right)$ on each local chart $Q = (U, \rho, E, r)$ with second page given by $\hH_Q^i \left( \hH^j \left( \aA \otimes \oO_Z^W \right) \right)$, it is easy to see that for each $\ell$
\begin{align} \label{loc 6.16}
    \sum_\ell (-1)^\ell [ \bB^\ell ] = \sum_{i+j = \ell} (-1)^{i+j} \left[ \hH_K^i \left( \hH^j \left( \aA \otimes \oO_Z^W \right) \right) \right]
\end{align}
since the spectral sequences are functorial with respect to morphisms of local charts $\gamma \colon Q \to Q'$.

Similarly for the double complex $(K(E) \otimes \aA_E) \otimes \oO_Z^W|_E $ we get
\begin{align} \label{loc 6.17}
    \sum_\ell (-1)^\ell [ \bB^\ell ] = \sum_{i+j = \ell} (-1)^{i+j} \left[ \hH^i \left( \hH_K^j ( \aA ) \otimes \oO_Z^W \right) \right]
\end{align}
Combining \eqref{loc 6.14}, \eqref{loc 6.15}, \eqref{loc 6.16} and \eqref{loc 6.17} yields the desired equality.
\end{proof}

Having defined the virtual structure sheaf, we may raise the following natural question.
\begin{question}
Do torus localization \cite{GrabPand} and cosection localization \cite{Cosection} hold for the virtual structure sheaf under an almost perfect obstruction theory?
\end{question}
We will get back to this question in a subsequent paper. 

\section{Donaldson-Thomas Invariants and other Applications}

In this section, we discuss several applications of the theory developed in this paper. Before we do so, we introduce some terminology for convenience and state and prove a lemma, that will be used in multiple occasions.

\begin{defi}[Kuranishi model] A \emph{Kuranishi model} for a scheme $U \to S$ is the data of a triple $\Lambda = (V, F_V, \omega_V)$ where $V \to S$ is a smooth morphism, $F_V$ is a locally free sheaf on $V$ and $\omega_V \in H^0(V, F_V)$ such that the vanishing locus of $\omega_V$ is precisely $U$.

A Kuranishi model induces a perfect obstruction theory on $U \to S$
\begin{align*}
    \xymatrix{
    E_\Lambda \ar@{=}[r] \ar[d] & [F_V^\vee |_U \ar[d]_-{\omega_V^\vee} \ar[r]^-{d\omega_V^\vee} & \Omega_{V/S}|_U] \ar@{=}[d] \\
    \bL_{U/S}^{\geq -1} \ar@{=}[r] & [I/I^2 \ar[r]_-{d} & \Omega_{V/S}|_U]
    }
\end{align*}
where $I$ is the ideal sheaf of $U$ in $V$.
\end{defi}

\begin{defi}
Let $K = (V,F_V,\omega_V)$ and $\Lambda = (W, F_W, \omega_W)$ be two Kuranishi models on schemes $T \to S$ and $U \to S$ respectively. We say that $K, \Lambda$ are \emph{$\Omega$-compatible} if they satisfy:
\begin{enumerate}
    \item There exist an \'{e}tale morphism $T \to U$ and an unramified morphism $\Phi \colon V \to W$ such that the diagram
    \begin{align*}
    \xymatrix{
        T \ar[d] \ar[r] & U \ar[d] \\
        V \ar[dr] \ar[r]^-{\Phi} & W \ar[d] \\
         & S
        }
    \end{align*}
    commutes.
    \item There exists a surjective morphism $\eta_\Phi \colon F_W |_V \to F_V$ such that $$\eta_\Phi(\omega_W |_V)=\omega_V.$$
    \item $\eta_\Phi$ induces an isomorphism of obstruction sheaves 
    $$\eta_\Phi \colon h^1(E_K^\vee) \lr h^1(E_\Lambda^\vee|_T).$$
\end{enumerate}
\end{defi}

\begin{lem} \label{lemma 7.2}
Let $K = (V,F_V,\omega_V)$ and $\Lambda = (W, F_W, \omega_W)$ be two $\Omega$-compatible Kuranishi models on $S$-schemes $T$ and $U$ respectively. Then, up to shrinking $T$, there exists a quasi-isomorphism $\psi \colon E_\Lambda|_T \to E_K$ making the triangle
\begin{align*}
    \xymatrix{
    E_\Lambda|_T \ar[d] \ar[r]^-{\psi} & E_K \ar[d] \\
    \bL_{T/S}^{\geq -1}|_U \ar[r] & \bL_{U/S}^{\geq -1}
    }
\end{align*}
commutative and satisfying $h^1(\psi^\vee) = \eta_\Phi \colon h^1(E_K^\vee) \lr h^1(E_\Lambda^\vee|_T)$.
\end{lem}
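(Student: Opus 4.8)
\textbf{Proof strategy for Lemma \ref{lemma 7.2}.}
The plan is to produce $\psi$ by directly comparing the two explicit two-term complexes that the Kuranishi models $K$ and $\Lambda$ give rise to, using the unramified morphism $\Phi \colon V \to W$ and the surjection $\eta_\Phi \colon F_W|_V \to F_V$ as the basic input. Recall that $E_\Lambda|_T = [F_W^\vee|_T \xrightarrow{d\omega_W^\vee} \Omega_{W/S}|_T]$ and $E_K = [F_V^\vee|_T \xrightarrow{d\omega_V^\vee} \Omega_{V/S}|_T]$, both placed in degrees $-1, 0$.

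First I would construct the map in degree $-1$: dualizing the surjection $\eta_\Phi$ gives an injection $\eta_\Phi^\vee \colon F_V^\vee \to F_W^\vee|_V$, and restricting to $T$ gives the degree $-1$ component of a map $E_K \to E_\Lambda|_T$ — but we want a map in the \emph{other} direction, so the first subtlety is that $\psi$ should go $E_\Lambda|_T \to E_K$ while $\eta_\Phi^\vee$ naturally goes the other way. The resolution is that $\psi$ is not induced by $\eta_\Phi^\vee$ on the nose; rather, one uses $\eta_\Phi$ itself. Concretely, in degree $-1$ we use a splitting: since $\eta_\Phi$ is surjective and (after shrinking $T$) $F_V$ is free, $F_W^\vee|_T$ contains $F_V^\vee|_T$ as a direct summand via $\eta_\Phi^\vee$, and we take the complementary projection $p \colon F_W^\vee|_T \to F_V^\vee|_T$; in degree $0$ we use the pullback map on differentials $\Phi^* \colon \Omega_{W/S}|_T \to \Omega_{V/S}|_T$ (which makes sense as $\Phi$ is a morphism over $S$). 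The condition $\eta_\Phi(\omega_W|_V) = \omega_V$ is exactly what guarantees that these two maps form a morphism of complexes, i.e. that the square
\begin{align*}
\xymatrix{
F_W^\vee|_T \ar[r]^-{d\omega_W^\vee} \ar[d]_-{p} & \Omega_{W/S}|_T \ar[d]^-{\Phi^*} \\
F_V^\vee|_T \ar[r]_-{d\omega_V^\vee} & \Omega_{V/S}|_T
}
\end{align*}
commutes after possibly shrinking $T$ further; here one differentiates the relation $\omega_V = \eta_\Phi(\omega_W|_V)$ along $T$ using the Leibniz rule and the fact that $\omega_W$, $\omega_V$ vanish on $U$, $T$ respectively, so the terms involving $d\eta_\Phi$ drop out on the vanishing locus.

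Next I would check that $\psi$ is a quasi-isomorphism. On $h^0$ both complexes compute $\Omega_{U/S}|_T = \Omega_{T/S}$ via the surjections from $\Omega_{V/S}$ and $\Omega_{W/S}$, and $\Phi^*$ is compatible with these, so $h^0(\psi)$ is an isomorphism. On $h^1$, the claim is precisely that $h^1(\psi^\vee) = \eta_\Phi \colon h^1(E_K^\vee) \to h^1(E_\Lambda^\vee|_T)$, and this is hypothesis (3) in the definition of $\Omega$-compatibility — one needs only to verify that the map on $h^1(-^\vee)$ induced by the complex map $\psi$ we built agrees with $\eta_\Phi$, which is a direct unwinding of the definitions since the degree $-1$ part of $\psi^\vee$ is, up to the splitting, $\eta_\Phi$ itself. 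Since $\psi$ induces isomorphisms on both $h^0$ and $h^1$ (and the complexes are concentrated in degrees $-1, 0$), it is a quasi-isomorphism.

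Finally, the commutativity of the triangle with $\bL_{T/S}^{\geq -1}|_U \to \bL_{U/S}^{\geq -1}$ follows from the naturality of the identifications in the definition of a Kuranishi model: the obstruction theory maps $\phi_\Lambda$, $\phi_K$ are built from the canonical presentations $[I_U/I_U^2 \to \Omega_{W/S}|_U]$ and $[I_T/I_T^2 \to \Omega_{V/S}|_T]$ of the truncated cotangent complexes, and the morphism $V \to W$ over $S$ together with the étale map $T \to U$ induces exactly the comparison map $\bL_{U/S}^{\geq -1}|_T \to \bL_{T/S}^{\geq -1}$; chasing the squares shows $\psi$ is compatible with these. \textbf{The main obstacle} I anticipate is the degree $-1$ construction: $\eta_\Phi$ naturally points the ``wrong way'' for a map of complexes $E_\Lambda|_T \to E_K$, so the splitting argument — and the verification that the resulting square commutes only after shrinking $T$ (because the splitting and the commutativity are generic conditions near the closed locus $T$) — is the delicate point, and one must be careful that the shrinking is compatible with keeping $\eta_\Phi$ surjective and $F_V$ free. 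Everything else is a routine diagram chase, which is why I would leave those details to the reader as the statement invites.
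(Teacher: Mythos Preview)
Your overall architecture matches the paper's: split $\eta_\Phi$, use $\Phi^*$ in degree $0$, and build the degree $-1$ component from the splitting. But the step you flag as the ``main obstacle'' really is one, and your proposed justification for it does not go through.

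The Leibniz argument you give, differentiating $\omega_V = \eta_\Phi(\omega_W|_V)$ and using that $\omega_W|_V$ vanishes on $T$, yields only the relation
\[
d\omega_V^\vee|_T \;=\; \Phi^* \circ d\omega_W^\vee|_T \circ \eta_\Phi^\vee
\]
as maps $F_V^\vee|_T \to \Omega_{V/S}|_T$. Precomposing with your projection $p$ gives $d\omega_V^\vee \circ p = \Phi^* \circ d\omega_W^\vee \circ (\eta_\Phi^\vee \circ p)$, and since $\eta_\Phi^\vee \circ p$ is only the idempotent projecting onto $\im(\eta_\Phi^\vee)$, the two paths in your square differ by $\Phi^* \circ d\omega_W^\vee$ restricted to $R^\vee|_T = \ker p$. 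Concretely, writing $\omega_W|_V = (\omega_V, \omega_R)$ under the splitting $F_W|_V \cong F_V \oplus R$, this discrepancy is $d\omega_R^\vee|_T \colon R^\vee|_T \to \Omega_{V/S}|_T$, which has no reason to vanish: $\omega_R$ vanishes on $T$ but its derivative along $T$ generally does not. Shrinking $T$ cannot fix this, since the failure is generic.

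The paper's remedy is exactly to absorb this discrepancy into the degree $-1$ map. The section $\omega_R$ determines a map $R^\vee|_T \to I/I^2$; after shrinking $T$ so that $R^\vee$ is free, one lifts this through the surjection $F_V^\vee|_T \twoheadrightarrow I/I^2$ to obtain $\alpha \colon R^\vee|_T \to F_V^\vee|_T$, and then replaces your naive projection $p$ by the corrected retraction $\id + \alpha \colon F_V^\vee|_T \oplus R^\vee|_T \to F_V^\vee|_T$. With this choice the square does commute (already at the level of conormal sheaves $I/I^2 \leftarrow J/J^2|_T$), and the rest of your argument --- the check on $h^0$, $h^1$, and compatibility with the truncated cotangent complexes --- then goes through as you outline. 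So the shrinking is used not to force commutativity of a fixed square, but to guarantee the existence of the lift $\alpha$.
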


\begin{proof}
Let $I$ be the ideal of $U$ in $V$ and $J$ be the ideal of $U$ in $W$. Up to shrinking $V$, we may split the surjection $\eta_\Phi$ and assume that $F_W|_V = F_V \oplus R $ such that $\eta_\Phi$ is given by projection to the first factor. We then have a commutative diagram
\begin{align*}
    \xymatrix{
    F_V^\vee |_T \ar[d] \ar[r] & F_V^\vee|_T \oplus R^\vee |_T \ar[d] \\
    I/I^2 \ar[d] & \ar[l] J/J^2|_T \ar[d] \\
    \Omega_{V/S}|_T & \ar[l] \Omega_{W/S}|_T
    }
\end{align*}
Up to shrinking $T$, since $F_V^\vee |_T \to I/I^2$ is surjective, we may lift the arrow $R^\vee|_T \to I/I^2$ to a morphism $\alpha \colon R^\vee |_T \to F_V^\vee |_T$. It is then simple to check that the commutative diagram
\begin{align*}
    \xymatrix{
    F_V^\vee |_T \ar[d] & \ar[l]^-{\id + \alpha} F_V^\vee|_T \oplus R^\vee |_T \ar[d] \\
    I/I^2 \ar[d] & \ar[l] J/J^2|_T \ar[d] \\
    \Omega_{V/S}|_T & \ar[l] \Omega_{W/S}|_T
    }
\end{align*}
gives the desired quasi-isomorphism $\psi$.
\end{proof}

\subsection{Derived Deligne-Mumford stacks} \label{derived application} Let $\xX$ be a quasi-smooth derived Deligne-Mumford stack with classical truncation the Deligne-Mumford stack $X = t_0(\xX)$. The restriction $\bL_\xX|_X$  of its derived cotangent complex to $X$ is a perfect complex with Tor-amplitude $[-1,0]$ and the morphism
$$\bL_\xX|_X \lr \bL_X^{\geq -1}$$
gives a perfect obstruction theory $\phi$ on $X$. Thus the coarse intrinsic normal cone stack $\fc_X$ embeds into the sheaf stack $\Ob_\phi$ and we may define a virtual structure sheaf
$$[\oO_X\virt,\phi] = 0^!_{\Ob_\phi}[\oO_{\fc_X}] \in K_0(X).$$
This should coincide with the usual structure sheaf 
$$[\oO_X\virt] = \sum_{i \leq 0} (-1)^i [\pi_i(\oO_\xX)] \in K_0(X).$$
This follows from [CFK] in the particular case when $\xX$ is a quasi-smooth dg-scheme.

\subsection{d-critical Deligne-Mumford stacks} \label{d-crit application} Let $X$ be a d-critical Deligne-Mumford stack (cf. \cite{JoyceDCrit}) or a critical virtual manifold (cf. \cite{KiemLiCat}). 
By \cite{JoyceDCrit}, we have an \'{e}tale cover $\lbrace U\lalp \to X \rbrace_{\alpha \in A}$ with the following properties:
\begin{enumerate}
    \item For each $\alpha$, there exists a smooth scheme $V\lalp$ and a function $f\lalp \colon V\lalp \to \bA^1$ such that $\Lambda\lalp = (V\lalp, \Omega_{V\lalp}, df\lalp)$ is a Kuranishi model for $U\lalp$, called a d-critical chart, inducing a perfect obstruction theory $\phi\lalp \colon E\lalp \to \bL_{U\lalp}^{\geq -1}$.
    \item For every pair of indices $\alpha, \beta$, there exists an \'{e}tale cover $\lbrace T_\gamma \to U\lab \rbrace_{\gamma \in \Gamma}$ such that for $\lambda = \alpha, \beta$ there exist unramified morphisms $\Phi_\lambda' \colon V\lal' \to V\lal$ making the diagrams
    \begin{align*}
    \xymatrix{
        T_\gamma \ar[d] \ar[r] & U\lal \ar[d] \\
        V\lal' \ar[r] & V\lal
        }
    \end{align*}
    commute and $K\lal = (V\lal', \Omega_{V\lal'}, f\lal|_{V\lal'})$ is a d-critical chart on $T_\gamma$.
    \item There exists a d-critical chart $M_\gamma = (W_\gamma, \Omega_{W_\gamma}, df_\gamma)$ for $T_\gamma$ and unramified morphisms $\Phi\lal \colon V\lal' \to W_\gamma$ such that $f_\gamma \circ \Phi\lal = f\lal|_{V\lal'}$.
\end{enumerate}

By (3), we see that $K\lal$ and $M_\gamma$ are $\Omega$-compatible and thus by Lemma~\ref{lemma 7.2} $E_{K\lal}$ and $E_{M_\gamma}$ are isomorphic obstruction theories on $T_\gamma$. By (2), each $E_{K\lal}$ is isomorphic as a perfect obstruction theory with $E\lalp|_{T_\gamma}$. Combining these two, we see that $E\lalp|_{T_\gamma}$ and $E\lbet|_{T_\gamma}$ are isomorphic obstruction theories. Moreover, by the results of \cite{JoyceDCrit} it follows that the induced isomorphisms at the level of obstruction sheaves satisfy the cocycle condition and thus glue to define a global obstruction sheaf on $X$. We therefore obtain an almost perfect obstruction theory $\phi$ for $X$ on the cover $\lbrace U\lalp \to X \rbrace$. Note that each obstruction theory $E\lalp$ is symmetric and $\Ob_\phi = \Omega_X$. We deduce the following theorem.

\begin{thm}
Let $X$ be a d-critical Deligne-Mumford stack. Then $X$ admits an almost perfect obstruction theory $\phi$ and thus has a virtual structure sheaf $[\oO_X\virt] = 0^!_{\Ob_\phi}[\oO_{\fc_X}] = 0^!_{\Omega_X} [\oO_{\fc_X}] \in K_0(X)$.
\end{thm}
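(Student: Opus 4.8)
The plan is to verify that the \'etale cover $\{U_\alpha \to X\}_{\alpha \in A}$ furnished by \cite{JoyceDCrit}, together with the perfect obstruction theories $\phi_\alpha \colon E_\alpha \to \bL_{U_\alpha}^{\geq -1}$ coming from the d-critical charts $\Lambda_\alpha = (V_\alpha, \Omega_{V_\alpha}, df_\alpha)$, satisfies both conditions (1) and (2) of Definition~\ref{almost perfect obs th}; once this is done, Theorem~\ref{thm 3.4} produces the closed cone substack $\fc_\phi \subset \Ob_\phi$ and the Gysin map of Definition~\ref{Gysin morphism} gives the virtual structure sheaf $[\oO_X\virt] = 0^!_{\Ob_\phi}[\oO_{\fc_\phi}]$. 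The identification $\Ob_\phi = \Omega_X$ will follow because each $E_\alpha$ is symmetric, so $\Ob_{\phi_\alpha} = h^1(E_\alpha^\vee) = \Omega_{V_\alpha}|_{U_\alpha}/(\text{image of }d^2 f_\alpha)$, which is canonically $\Omega_{U_\alpha}$; the gluing isomorphisms $\psi_{\alpha\beta}$ are then the canonical ones for the cotangent sheaf, manifestly satisfying the cocycle condition, which is exactly condition (1).

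For condition (2), the key input is the refined compatibility data (2) and (3) of \cite{JoyceDCrit} recalled in the excerpt. First I would observe that by (3) the d-critical charts $K_\lambda = (V_\lambda', \Omega_{V_\lambda'}, f_\lambda|_{V_\lambda'})$ and $M_\gamma = (W_\gamma, \Omega_{W_\gamma}, df_\gamma)$ are $\Omega$-compatible in the sense of the preceding definition: the unramified morphism $\Phi_\lambda \colon V_\lambda' \to W_\gamma$ with $f_\gamma \circ \Phi_\lambda = f_\lambda|_{V_\lambda'}$ yields a surjection $\eta_{\Phi_\lambda} \colon \Omega_{W_\gamma}|_{V_\lambda'} \to \Omega_{V_\lambda'}$ taking $df_\gamma$ to $df_\lambda|_{V_\lambda'}$, and this induces the required isomorphism of obstruction sheaves. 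Applying Lemma~\ref{lemma 7.2} to each such pair gives, after shrinking $T_\gamma$, a quasi-isomorphism $E_{K_\lambda} \to E_{M_\gamma}$ over $\bL_{T_\gamma}^{\geq -1}$ compatible with $\eta_{\Phi_\lambda}$. Composing for $\lambda = \alpha$ and $\lambda = \beta$ through the common chart $M_\gamma$ — and using condition (2) to identify each $E_{K_\lambda}$ with $E_\lambda|_{T_\gamma}$ — produces isomorphisms $\eta_{\alpha\beta\gamma} \colon E_\alpha|_{T_\gamma} \to E_\beta|_{T_\gamma}$ in $D^b(\Coh T_\gamma)$ fitting into the commutative diagram \eqref{loc 5.1}, with $h^1(\eta_{\alpha\beta\gamma}^\vee) = \psi_{\alpha\beta}^{-1}|_{T_\gamma}$ because all the maps at the level of obstruction sheaves are the canonical cotangent identifications; the \'etale cover $\{T_\gamma \to U_{\alpha\beta}\}$ is the one required in condition (2).

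The main obstacle is bookkeeping the compatibility with the obstruction-sheaf gluing $\psi_{\alpha\beta}$: one must check that the local isomorphisms $\eta_{\alpha\beta\gamma}$ built by chasing through $M_\gamma$ really do induce $\psi_{\alpha\beta}^{-1}$ on $h^1$ of the duals, rather than some twist of it. Here the decisive point is that for a symmetric obstruction theory arising from a d-critical chart the obstruction sheaf is intrinsically $\Omega_X$, so \emph{all} the isomorphisms in sight — the $\psi_{\alpha\beta}$, the ones coming from Lemma~\ref{lemma 7.2}, and the ones from condition (2) — are forced to be the canonical identification of $\Omega_X$ with itself on overlaps; this is precisely what \cite{JoyceDCrit} establishes when showing the obstruction sheaves glue and satisfy the cocycle condition. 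Granting this, conditions (1) and (2) both hold, $\phi$ is an almost perfect obstruction theory, and the stated formula $[\oO_X\virt] = 0^!_{\Ob_\phi}[\oO_{\fc_X}] = 0^!_{\Omega_X}[\oO_{\fc_X}]$ is immediate from the construction of \S\ref{sec 5} together with $\Ob_\phi = \Omega_X$.
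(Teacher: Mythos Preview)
Your proposal is correct and follows essentially the same argument as the paper: you use the same \'etale cover and d-critical charts from \cite{JoyceDCrit}, apply Lemma~\ref{lemma 7.2} to the $\Omega$-compatible pairs $K_\lambda$, $M_\gamma$ to produce the local isomorphisms $\eta_{\alpha\beta\gamma}$, and invoke \cite{JoyceDCrit} for the cocycle condition on obstruction sheaves, exactly as the paper does. Your additional remarks explaining why $\Ob_{\phi_\alpha} \cong \Omega_{U_\alpha}$ and why the $h^1$-compatibility is forced are helpful elaborations but do not depart from the paper's route.
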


\begin{rmk}
If $X$ is the truncation of a $(-1)$-shifted symplectic derived Deligne-Mumford stack $\xX$, then $E\lalp \simeq \bL_{\xX}|_{U\lalp}$ and the virtual structure sheaf agrees with the one constructed in Subsection~\ref{derived application}.
\end{rmk}

\subsection{$K$-theoretic Donaldson-Thomas invariants of simple perfect complexes} \label{Classical DT application} The results of this paper apply to the main application of \cite{LiChang} as well, yielding $K$-theoretic Donaldson-Thomas invariants of simple bounded complexes. We recall the setup.

Let $\pi \colon X \to S$ be a smooth, proper family of Calabi-Yau threefolds over a smooth base $S$. Fix a line bundle $L$ on $X$ and let $\dD^L_{X/S}$ be the moduli space of simple universally gluable  perfect complexes $E \in D^b(\Coh X)$ whose determinant is isomorphic to $L$. Here a perfect complex  $E$ is called universally gluable if $\Ext^{<0}(E,E)=0$. 
More precisely, $\dD^L_{X/S}$ sends each $S$-scheme $T$ to the set of simple universally gluable perfect complexes $E \in D^b(\Coh (T \times_S X))$ such that $\det E \cong \pi_X^* L \otimes \pi_T^* J$, for some line bundle $J$ on $T$ with $\pi_S, \pi_T$ the projections of $T \times_S X$ onto its two factors.
The existence of such a moduli space of an algebraic space, locally of finite type, follows from \cite{Inaba, Lieblich}. 

Let $\mM \sub \dD^L_{X/S}$ be a proper, open and closed subspace. Then, using the existence of a universal semi-family, \cite{LiChang} produce the following data:
\begin{enumerate}
    \item An \'{e}tale cover $\lbrace U\lalp \to \mM \rbrace_{\alpha \in A}$.
    \item For each index $\alpha$, a perfect complex $E\lalp \in D^b(\Coh (X \times_S U\lalp))$.
    \item For each pair of indices $\alpha$ and $\beta$, quasi-isomorphisms
        \begin{align} \label{6.1}
            f\lab \colon E\lalp |_{X \times_S U\lab} \lr E\lbet  |_{X \times_S U\lab}
        \end{align}
        which satisfy the cocycle condition: for any triple of indices $\alpha, \beta, \gamma$ there exists a $c\labc \in \Gamma(\oO_{U\labc}^*)$ such that
        \begin{align} \label{cocycle 1}
            \pi_{\gamma \alpha}^* f_{\gamma \alpha} \circ  \pi_{\beta \gamma}^* f_{\beta \gamma} \circ \pi\lab^* f\lab = c\labc \cdot \id \colon E\lalp |_{X \times_S U\labc} \lr E\lalp  |_{X \times_S U\labc}
        \end{align}
        where $\pi\lab \colon X \times_S U\labc \to X \times_S U\lab$ denotes the projection and similarly for $\pi_{\beta \gamma}, \pi_{\gamma \alpha}$.
\end{enumerate}

Using the Atiyah class as in \cite{HuyThomas} and the properties of universal semi-families, \cite{LiChang} proceed to construct for each index $\alpha$, a perfect obstruction theory 
\begin{align*}
    \phi\lalp \colon \RHom(E\lalp, E\lalp)_0[1]^\vee \lr \bL_{U\lalp/S}^{\geq -1}
\end{align*}
where the subscript $0$ denotes the traceless part. We denote $$F\lalp = \RHom(E\lalp, E\lalp)_0[1]^\vee.$$

The morphisms $f\lab$ induce isomorphisms $g\lab \colon F\lalp|_{U\lab} \to F\lbet|_{U\lab}$ which fit in a commutative diagram
\begin{align*}
    \xymatrix{
    F\lalp|_{U\lab} \ar[d]_-{\phi\lalp|_{U\lab}} \ar[r]^-{g\lab} & F\lbet|_{U\lab} \ar[d]^-{\phi\lbet|_{U\lab}} \\
    \bL_{U\lalp/S}^{\geq -1}|_{U\lab} \ar[r] & \bL_{U\lbet/S}^{\geq -1}|_{U\lab}
    }
\end{align*}
Let $\psi\lab = h^1(g_{\beta \alpha}^\vee) \colon \Ob_{\phi\lalp}|_{U\lab} \to \Ob_{\phi\lbet}|_{U\lab}$. 

Since a scaling automorphism $c\lalp \cdot \id \colon E\lalp \to E\lalp$ with $c\lalp \in \Gamma(\oO_{U\lalp}^*)$ induces the identity automorphism of $\RHom(E\lalp, E\lalp)_0$, the cocycle condition \eqref{cocycle 1} implies that 
 \begin{align*}
    \pi_{\gamma \alpha}^* \psi_{\gamma \alpha} \circ  \pi_{\beta \gamma}^* \psi_{\beta \gamma} \circ \pi\lab^* \psi\lab = \id.
\end{align*} 

It is therefore immediate that these data give an almost perfect obstruction theory on $\mM \to S$.

\begin{thm-defi}
Let $X \to S$ be a smooth, proper family of Calabi-Yau threefolds and $\mM \sub \dD^L_{X/S}$ a proper, open and closed substack of the stack of simple perfect complexes on $X \to S$ with determinant $L$. Then $\mM \to S$ admits an almost perfect obstruction theory and a virtual structure sheaf $[\oO_\mM\virt]\in K_0(\mM)$.

When $S = \Spec \bC$ and $X = W$ is a proper Calabi-Yau threefold, for any class $\beta \in K^0(\mM)$ we define the $\beta$-twisted $K$-theoretic Donaldson-Thomas invariant associated to $\mM$ as the number $\chi(\mM, \beta \cdot \oO_\mM\virt)$. 
\end{thm-defi}

\subsection{$K$-theoretic Donaldson-Thomas invariants of semistable objects by partial desingularizations} Beyond $K$-theoretic DT invariants of derived category objects, we can use almost perfect obstruction theories to produce $K$-theoretic generalized DT invariants of sheaves and complexes, using the results of \cite{KLS} and \cite{Sav}. 

Let $\pi \colon X \to S$ be a smooth, projective family of Calabi-Yau threefolds over a quasi-projective smooth base $S$. Consider the moduli stack $\mM = \mM^{\sigma-ss}(\gamma) \to S$ of fibrewise $\sigma$-semistable perfect complexes in $D^b(\Coh X)$ with Chern character $\gamma \in \Gamma(S,Rp_*\bQ)$, and fixed determinant where $\sigma$ is any stability condition satisfying:
\begin{enumerate}
    \item $\mM \to S$ is the truncation of a $(-1)$-shifted symplectic derived Artin stack $\underline{\mM} \to S$.
    \item $\mM \to S$ admits a proper good moduli space $M \to S$, as in \cite{GoodAlper}.
    \item $\mM \to S$ is of finite type.
\end{enumerate}

Using the results of \cite{PTVV}, this includes the following examples:
\begin{enumerate}
    \item Gieseker stability and slope stability for coherent sheaves with any base $S$, as in \cite{HuyLehn}. These are two classical quotient stacks obtained by Geometric Invariant Theory (GIT).
    \item Polynomial stability with base $S$ being a point, as in \cite{Lo, Lo2}. This is a consequence of the recent results in \cite{AlpHalpHein}.
    \item Bridgeland stability with base $S$ a smooth quasi-projective curve, as in \cite{TodaPiya} and \cite{quinticstab}. This follows from the work of \cite{familystab}, which makes use of \cite{AlpHalpHein} as well.
\end{enumerate}

In \cite{KLS} and \cite{Sav}, it is shown that a canonical procedure, inspired by Kirwan's blowup procedure developed in \cite{Kirwan}, produces the following data:
\begin{enumerate}
    \item A Deligne-Mumford stack $\ti{\mM} \to \mM$, proper over $S$, called the Kirwan partial desingularization of $\mM$.
    \item An \'{e}tale cover $\lbrace U\lalp \to \ti{\mM} \rbrace_{\alpha \in A}$ with Kuranishi models $\Lambda\lalp = (V\lalp, F\lalp, \omega\lalp)$.
    \item For each pair of indices $\alpha, \beta$, an \'{e}tale cover $\lbrace T_\gamma \to U\lab \rbrace_{\gamma \in \Gamma}$ such that for $\lambda = \alpha, \beta$ there exist unramified morphisms $\Phi_\lambda \colon V_\gamma \to V\lal$ making the diagrams
    \begin{align*}
    \xymatrix{
        T_\gamma \ar[d] \ar[r] & U\lal \ar[d] \\
        V_\gamma \ar[r] & V\lal
        }
    \end{align*}
    commute and a Kuranishi model $M_\gamma = (V_\gamma, F_\gamma, \omega_\gamma)$ on $T_\gamma$ which is $\Omega$-compatible to $\Lambda\lal$ via $\Phi\lal$. 
\end{enumerate}

We briefly outline the construction for the convenience of the reader in the absolute case when the base $S$ is a point and $\mM$ is a moduli stack of Gieseker semistable sheaves. In this case, $\mM$ is obtained by GIT and is a global quotient stack $\mM = [ X / G]$. 

Since $\mM$ is the truncation of a $(-1)$-shifted symplectic derived stack, the results of \cite{JoyceArt} imply that $\mM$ is a d-critical Artin stack. In particular, for every closed point $x \in \mM$ with (reductive) stabilizer $H$, there exists a smooth affine $H$-scheme $V$, an invariant function $f \colon V \to \bA^1$ and an \'{e}tale morphism
\begin{align} \label{eq1}
[ U / H ] \to \mM,
\end{align}
where $U = (df = 0) \sub V$. Moreover for every two such local presentations, there exist appropriate comparison data which are similar to the case of a d-critical Deligne-Mumford stack in Subsection~\ref{d-crit application}.

We have the following $H$-equivariant 4-term complex
\begin{equation}\label{eq2}
\fh=\mathrm{Lie}(H)\lra T_{V}|_{U} \xrightarrow{d(df)^\vee} F_{V}|_{U}=\Omega_{V}|_{U} \lra \fh^\vee.
\end{equation}
For $u \in U$ with finite stabilizer, this is quasi-isomorphic to a 2-term complex which gives a symmetric perfect obstruction theory of $[U / H]$ and thus of $\mM$ near $u$.

One may then apply Kirwan's partial desingularization procedure, using the notion of intrinsic blowup introduced in \cite{KiemLi}, as adapted in \cite{KLS}, to obtain the Kirwan partial desingularizations $\tilde{X} \to X$ and $\tilde{\mM} := [ \tilde{X} / G ] \to \mM$, which is a proper DM stack. The main idea is to perform a modified blowup of the loci of sheaves with the same reductive stabilizer, starting with the stabilizers of largest dimension and proceeding in decreasing order. 

We may lift the \'{e}tale cover \eqref{eq1} to an \'{e}tale cover
\begin{align} \label{eq3}
[ T / H ] \to \tilde{\mM},
\end{align}
where $T = ( \omega_S = 0 ) \sub S$ for $S$ a smooth affine $H$-scheme and $\omega_S \in H^0(S, F_S)$ an invariant section of an $H$-equivariant vector bundle $F_S$ on $S$. Moreover, there exists an effective invariant divisor $D_S$ such that \eqref{eq2} lifts to a 4-term complex
\begin{equation}\label{eq4}
\fh = \mathrm{Lie}(H) \lra T_{{S}}|_{{T}} \lra F_{{S}}|_{{T}} \lra \fh^\vee(-D_{{S}})
\end{equation}
whose first arrow is injective and last arrow is surjective. Therefore, \eqref{eq4} is quasi-isomorphic to a 2-term complex
\begin{equation}\label{eq5} 
d(F_{S}^\rred):
( d \omega_{S}^\vee )^\vee : T_{[S/H]}|_{{T}} \lra F^\rred_{{S}}|_{{T}},
\end{equation}
where $F^\rred_{{S}}$ is the kernel of the last arrow in \eqref{eq4}. 
Dualizing and taking the quotient by $H$, we get
\begin{equation}\label{eq6} 
d (\omega_{S}^\rred)^\vee : F_{[{S}/H]}^\rred|_{[{T}/H]}^\vee \lra \Omega_{[{S}/H]}|_{[{T}/H]}.
\end{equation}

Taking \'{e}tale slices of $[T/H]$ and using the $H$-equivariant data $(S, F_S^\rred, \omega_S^\rred)$ gives rise to the above \'{e}tale cover $\lbrace U\lalp \to \ti{\mM} \rbrace$ and the Kuranishi models $\Lambda\lalp$.\\

As in the previous subsection, by (3) and Lemma~\ref{lemma 7.2} we see that $E_{\Lambda\lal}|_{T_\gamma}$ and $E_{M_\gamma}$ are isomorphic obstruction theories on $T_\gamma$. Thus $E\lalp|_{T_\gamma}$ and $E\lbet|_{T_\gamma}$ are isomorphic obstruction theories and it is shown in \cite{KLS} that the induced isomorphisms for their obstruction sheaves glue to define a global obstruction sheaf. We thus obtain an almost perfect obstruction theory $\phi$ for $\ti{\mM} \to S$ on the cover $\lbrace U\lalp \to \ti{\mM} \rbrace$.

We can therefore give the following definition.

\begin{thm-defi}
Let $W$ be a smooth, projective Calabi-Yau threefold and $\mM = \mM^{\sigma-ss}(\gamma)$ be the moduli stack of $\sigma$-semistable perfect complexes in $D^b(\Coh W)$ with Chern character $\gamma$, where $\sigma$ is as above.

The Kirwan partial desingularization $\ti{M}$ admits an almost perfect obstruction theory $\phi$ and thus a virtual structure sheaf $[\oO_{\ti{\mM}}\virt] \in K_0(\ti{\mM})$. For any $\beta \in K^0(\mM)$, the $\beta$-twisted $K$-theoretic Donaldson-Thomas invariant via Kirwan blowups of $\mM$ is defined as the number
\begin{align*}
    \mathrm{DTK}^{\mathrm{K-th}}(\mM, \beta) = \chi ( \ti{\mM}, p^* \beta \cdot \oO_{\ti{\mM}}\virt )
\end{align*}
where $p \colon \ti{\mM} \to \mM$ is the natural projection morphism.
\end{thm-defi}

In the relative case, where $X \to S$ is a smooth, projective family of Calabi-Yau threefolds with special fiber $W$ and $\mM \to S$ is the relative moduli stack of $\sigma$-semistable complexes, the fact that the Kirwan partial desingularization construction behaves well in families and the deformation invariance of the virtual structure sheaf of an almost perfect obstruction theory, proved above in Subsection~\ref{deformation invariance}, imply that the $K$-theoretic DTK invariant is invariant under deformation of the Calabi-Yau threefold $W$.

\appendix

\section{Semi-perfect Obstruction Theory} \label{appendix}

In this appendix we review the definition of a semi-perfect obstruction theory and 
prove Proposition \ref{y4}.

Let $U \to S$ be a morphism of finite type, where $U$ is a Deligne-Mumford stack of finite type and $S$ a smooth Artin stack of pure dimension. We first recall the definition of 
perfect obstruction theory \cite{BehFan, LiTian}.

\begin{defi} \emph{(Perfect obstruction theory \cite{BehFan})} \label{Perf obs th}
A (truncated) perfect (relative) obstruction theory consists of a morphism $\phi \colon E \to \bL_{U/S}^{\geq -1}$ in $D^b(\Coh U)$ such that
\begin{enumerate}
\item $E$ is of perfect amplitude, contained in $[-1,0]$.
\item $h^0(\phi)$ is an isomorphism and $h^{-1}(\phi)$ is surjective.
\end{enumerate}
We refer to $\Ob_\phi := \Hone(E^\vee)$ as the obstruction sheaf of $\phi$.
\end{defi}

\begin{defi} \emph{(Infinitesimal lifting problem)} \label{Inf lift prob}
Let $\iota \colon \Delta \to \bar{\Delta}$ be an embedding with $\bar{\Delta}$ local Artinian, such that $I \cdot \fm = 0$ where $I$ is the ideal of $\Delta$ and $\fm$ the closed point of $\bar{\Delta}$. We call $(\Delta, \bar{\Delta}, \iota, \fm)$ a small extension. Given a commutative square
\begin{align}
\xymatrix{
\Delta \ar[r]^g \ar[d]^\iota & U \ar[d]\\
\bar{\Delta} \ar[r] \ar@{-->}[ur]_{\bar{g}} & S
}
\end{align}
such that the image of $g$ contains a point $p \in U$, the problem of finding $\bar{g} \colon \bar{\Delta} \to U$ making the diagram commutative is the ``infinitesimal lifting problem of $U/S$ at $p$".
\end{defi}

\begin{defi} \emph{(Obstruction space)} \label{Obs spaces}
For a point $p \in U$, the intrinsic obstruction space to deforming $p$ is $T_{p, U/ S}^1 := \Hone \left( (\bL_{U/S}^{\geq -1})^\vee \vert_p \right)$. The obstruction space with respect to a perfect obstruction theory $\phi$ is $\OB(\phi,p) := \Hone( E^\vee \vert_p )$.
\end{defi}

Given an infinitesimal lifting problem of $U/S$ at a point $p$, there exists by the standard theory of the cotangent complex a canonical element 
\begin{align}
\omega \left( g, \Delta, \bar{\Delta} \right) \in \Ext^1 \left( g^{\ast} \bL_{U/S}^{\geq -1} \vert_p, I\right) = T^1_{p, U/S} \otimes_\bC I
\end{align} 
whose vanishing is necessary and sufficient for the lift $\bar{g}$ to exist. 

\begin{defi} \emph{(Obstruction assignment)} \label{Obs assignment}
For an infinitesimal lifting problem of $U / S$ at $p$ and a perfect obstruction theory $\phi$ the obstruction assignment at $p$ is the element
\begin{align}
ob_U(\phi,g,\Delta,\bar{\Delta}) = h^1(\phi^\vee) \left( \omega \left( g, \Delta, \bar{\Delta} \right) \right) \in \OB(\phi,p) \otimes_\bC  I.
\end{align}
\end{defi}

\begin{defi} \emph{($\nu$-equivalence)}\label{Same obs assign}
Let $\phi \colon E \to \bL_{U / S}^{\geq -1}$ and $\phi' \colon E' \to \bL_{U / S}^{\geq -1}$ be two perfect obstruction theories and $\psi \colon \Ob_\phi \to \Ob_{\phi'}$ be an isomorphism. We say that the obstruction theories are $\nu$-equivalent if they give the same obstruction assignments via $\psi$, i.e. for any infinitesimal lifting problem of $U/S$ at $p$
\begin{align}
\psi \left( ob_U(\phi,g,\Delta,\bar{\Delta}) \right) = ob_U(\phi',g,\Delta,\bar{\Delta}) \in \OB(\phi',p) \otimes_\bC I.
\end{align}
\end{defi}
We are now ready to give the definition of a semi-perfect obstruction theory.

\begin{defi} \emph{(Semi-perfect obstruction theory \cite{LiChang})} \label{semi-perfect obs th}
Let $X\to S$ be a morphism, where $X$ is a DM stack of finite presentation and $S$ is a smooth quasi-projective scheme. A semi-perfect obstruction theory $\phi$ consists of an \'{e}tale covering $\lbrace U_\alpha \to X \rbrace_{\alpha \in A}$ 
of $X$ and perfect obstruction theories $\phi_\alpha \colon E_\alpha \to \bL_{U_\alpha / C}^{\geq -1}$ such that
\begin{enumerate}
\item For each pair of indices $\alpha, \beta$, there exists an isomorphism \begin{align*}
\psi_{\alpha \beta} \colon \Ob_{\phi_\alpha} \vert_{U_{\alpha\beta}} \lra \Ob_{\phi_\beta} \vert_{U_{\alpha\beta}}
\end{align*}
so that the collection $\lbrace \Ob_{\phi\lalp}, \psi\lab \rbrace$ gives descent data of a sheaf on $X$.
\item For each pair of indices $\alpha, \beta$, the obstruction theories $E_\alpha \vert_{U_{\alpha \beta}}$ and $E_\beta \vert_{U_{\alpha \beta}}$ give the same obstruction assignments via $\psi_{\alpha \beta}$ (as in Definition \ref{Same obs assign}).
\end{enumerate}
\end{defi}

\begin{rmk}
The obstruction sheaves $\lbrace \Ob_{\phi\lalp} \rbrace_{\alpha \in A}$ glue to define a sheaf $\Ob_{\phi}$ on $X$. This is the obstruction sheaf of the semi-perfect obstruction theory $\phi$.
\end{rmk}

We end this paper with a proof of the comparison result of generalized perfect obstruction theories. 

\begin{proof}[Proof of Proposition \ref{y4}]
By Definitions~\ref{semi-perfect obs th} and \ref{almost perfect obs th}, we need to show any two obstruction theories $E\lalp|_{U\lab}$ and $E\lbet|_{U\lab}$, which are part of the data of an almost perfect obstruction theory, give the same obstruction assignments via $\psi\lab$ (cf. Definition~\ref{Same obs assign}). 

Consider an infinitesimal lifting problem (cf. Definition~\ref{Inf lift prob}) of $U\lab/S$ at a point $p$. By definition, there exists $V:= V_\gamma \to U\lab$ \'{e}tale so that $g \colon \Delta \to U$ factors through $V \to U\lab$ and $E\lalp|_V$ and $E\lbet|_V$ are isomorphic and compatible with $\psi$. We then have a commutative diagram
\begin{align*} 
\xymatrix{
    g^* E\lalp^\vee|_p \ar[r]^-{\phi\lalp^\vee|_p} & g^* ( \bL_{U\lalp/S}^{\geq -1} )^\vee |_p  \ar[r] & g^*( \bL_{U\lab/S}^{\geq -1} )^\vee |_p \ar[r] & I[1] \\
    g^* E\lbet^\vee|_p \ar[r]_-{\phi\lbet^\vee|_p} \ar[u]_-{\psi\labc^\vee|_p} & g^* ( \bL_{U\lbet/S}^{\geq -1} )^\vee |_p  \ar[ur] \ar[u] 
    }
\end{align*}
which implies immediately that 
\begin{align*}
    ob_{U\lab}(\phi\lalp,g,\Delta,\bar{\Delta})  & = h^1(\psi\labc^\vee|_p) \left( ob_{U\lab}(\phi\lbet,g,\Delta,\bar{\Delta}) \right) = \\
    & = \psi\lab^{-1}|_p \left( ob_{U\lab}(\phi\lbet,g,\Delta,\bar{\Delta}) \right)
\end{align*}
and hence  $E\lalp|_{U\lab}$ and $E\lbet|_{U\lab}$ give the same obstruction assignments via $\psi\lab$, as desired.
\end{proof}

\bibliography{Master}

\newcommand{\etalchar}[1]{$^{#1}$}
\begin{thebibliography}{BBBBJ15}

\bibitem[AHH18]{AlpHalpHein}
Jarod {Alper}, Daniel {Halpern-Leistner}, and Jochen {Heinloth}.
\newblock {Existence of moduli spaces for algebraic stacks}.
\newblock {\em arXiv e-prints}, page arXiv:1812.01128, Dec 2018.

\bibitem[Alp13]{GoodAlper}
Jarod Alper.
\newblock Good moduli spaces for {A}rtin stacks [bons espaces de modules pour
  les champs d'{A}rtin].
\newblock {\em Annales de l'institut Fourier}, 63(6):2349--2402, 2013.

\bibitem[BBBBJ15]{JoyceArt}
Oren Ben-Bassat, Christopher Brav, Vittoria Bussi, and Dominic Joyce.
\newblock A `{D}arboux theorem' for shifted symplectic structures on derived
  {A}rtin stacks, with applications.
\newblock {\em Geom. Topol.}, 19(3):1287--1359, 2015.

\bibitem[Beh09]{BehFun}
Kai Behrend.
\newblock Donaldson-{T}homas type invariants via microlocal geometry.
\newblock {\em Ann. of Math. (2)}, 170(3):1307--1338, 2009.

\bibitem[BF97]{BehFan}
Kai Behrend and Barbara Fantechi.
\newblock The intrinsic normal cone.
\newblock {\em Invent. Math.}, 128(1):45--88, 1997.

\bibitem[BLM{\etalchar{+}}19]{familystab}
Arend {Bayer}, Mart{\'\i} {Lahoz}, Emanuele {Macr{\`\i}}, Howard {Nuer},
  Alexander {Perry}, and Paolo {Stellari}.
\newblock {Stability conditions in families}.
\newblock {\em arXiv e-prints}, page arXiv:1902.08184, Feb 2019.

\bibitem[CL11]{LiChang}
Huai-{L}iang Chang and Jun Li.
\newblock Semi-perfect obstruction theory and {D}onaldson-{T}homas invariants
  of derived objects.
\newblock {\em Comm. Anal. Geom.}, 19(4):807--830, 2011.

\bibitem[GP99]{GrabPand}
Tom Graber and Rahul Pandharipande.
\newblock Localization of virtual classes.
\newblock {\em Invent. Math.}, 135(2):487--518, 1999.

\bibitem[HL10]{HuyLehn}
Daniel Huybrechts and Manfred Lehn.
\newblock {\em The geometry of moduli spaces of sheaves}.
\newblock Cambridge Mathematical Library. Cambridge University Press,
  Cambridge, second edition, 2010.

\bibitem[HT10]{HuyThomas}
Daniel Huybrechts and Richard~P. Thomas.
\newblock Deformation-obstruction theory for complexes via {A}tiyah and
  {K}odaira-{S}pencer classes.
\newblock {\em Math. Ann.}, 346(3):545--569, 2010.

\bibitem[Ina02]{Inaba}
Michi-aki Inaba.
\newblock Toward a definition of moduli of complexes of coherent sheaves on a
  projective scheme.
\newblock {\em J. Math. Kyoto Univ.}, 42(2):317--329, 2002.

\bibitem[Joy15]{JoyceDCrit}
Dominic Joyce.
\newblock A classical model for derived critical loci.
\newblock {\em J. Differential Geom.}, 101(2):289--367, 2015.

\bibitem[Kir85]{Kirwan}
Frances~Clare Kirwan.
\newblock Partial desingularisations of quotients of nonsingular varieties and
  their {B}etti numbers.
\newblock {\em Ann. of Math. (2)}, 122(1):41--85, 1985.

\bibitem[KKP03]{KimKreschPant}
Bumsig Kim, Andrew Kresch, and Tony Pantev.
\newblock Functoriality in intersection theory and a conjecture of {C}ox,
  {K}atz, and {L}ee.
\newblock {\em J. Pure Appl. Algebra}, 179(1-2):127--136, 2003.

\bibitem[KL12]{KiemLiCat}
Young-Hoon {Kiem} and Jun {Li}.
\newblock {Categorification of Donaldson-Thomas invariants via Perverse
  Sheaves}.
\newblock {\em ArXiv e-prints}, December 2012.

\bibitem[KL13a]{Cosection}
Young-Hoon Kiem and Jun Li.
\newblock Localizing virtual cycles by cosections.
\newblock {\em J. Amer. Math. Soc.}, 26(4):1025--1050, 2013.

\bibitem[KL13b]{KiemLi}
Young-Hoon Kiem and Jun Li.
\newblock A wall crossing formula of {D}onaldson-{T}homas invariants without
  {C}hern-{S}imons functional.
\newblock {\em Asian J. Math.}, 17(1):63--94, 2013.

\bibitem[KLS17]{KLS}
Young-Hoon {Kiem}, Jun {Li}, and Michail {Savvas}.
\newblock {Generalized Donaldson-Thomas Invariants via Kirwan Blowups}.
\newblock {\em ArXiv e-prints}, December 2017.

\bibitem[Kre99]{Kresch}
Andrew Kresch.
\newblock Canonical rational equivalence of intersections of divisors.
\newblock {\em Invent. Math.}, 136(3):483--496, 1999.

\bibitem[Lee04]{yplee}
Yuan-Pin Lee.
\newblock Quantum {$K$}-theory. {I}. {F}oundations.
\newblock {\em Duke Math. J.}, 121(3):389--424, 2004.

\bibitem[{Li}18]{quinticstab}
Chunyi {Li}.
\newblock {On stability conditions for the quintic threefold}.
\newblock {\em arXiv e-prints}, page arXiv:1810.03434, Oct 2018.

\bibitem[Lie06]{Lieblich}
Max Lieblich.
\newblock Moduli of complexes on a proper morphism.
\newblock {\em J. Algebraic Geom.}, 15(1):175--206, 2006.

\bibitem[Lo11]{Lo}
Jason Lo.
\newblock Moduli of {PT}-semistable objects {I}.
\newblock {\em J. Algebra}, 339:203--222, 2011.

\bibitem[Lo13]{Lo2}
Jason Lo.
\newblock Moduli of {PT}-semistable objects {II}.
\newblock {\em Trans. Amer. Math. Soc.}, 365(9):4539--4573, 2013.

\bibitem[LT98]{LiTian}
Jun Li and Gang Tian.
\newblock Virtual moduli cycles and {G}romov-{W}itten invariants of algebraic
  varieties.
\newblock {\em J. Amer. Math. Soc.}, 11(1):119--174, 1998.

\bibitem[Oko17]{Okou2}
Andrei Okounkov.
\newblock Lectures on {K}-theoretic computations in enumerative geometry.
\newblock In {\em Geometry of moduli spaces and representation theory},
  volume~24 of {\em IAS/Park City Math. Ser.}, pages 251--380. Amer. Math.
  Soc., Providence, RI, 2017.

\bibitem[Oko19]{Okou1}
Andrei Okounkov.
\newblock Takagi lectures on {D}onaldson-{T}homas theory.
\newblock {\em Jpn. J. Math.}, 14(1):67--133, 2019.

\bibitem[PT09]{PT1}
Rahul Pandharipande and Richard~P. Thomas.
\newblock Curve counting via stable pairs in the derived category.
\newblock {\em Invent. Math.}, 178(2):407--447, 2009.

\bibitem[PT15]{TodaPiya}
Dulip {Piyaratne} and Yukinobu {Toda}.
\newblock {Moduli of Bridgeland semistable objects on 3-folds and
  Donaldson-Thomas invariants}.
\newblock {\em ArXiv e-prints}, April 2015.

\bibitem[PTVV13]{PTVV}
Tony Pantev, Bertrand To{\"e}n, Michel Vaqui{\'e}, and Gabriele Vezzosi.
\newblock Shifted symplectic structures.
\newblock {\em Publications math{\'e}matiques de l'IH{\'E}S}, 117(1):271--328,
  2013.

\bibitem[{Sav}]{Sav}
Michail {Savvas}.
\newblock Generalized {D}onaldson-{T}homas {I}nvariants of {D}erived {O}bjects
  via {K}irwan {B}lowups.
\newblock {\em To appear}.

\bibitem[Tho00]{Thomas}
Richard~P. Thomas.
\newblock A holomorphic {C}asson invariant for {C}alabi-{Y}au 3-folds, and
  bundles on {$K3$} fibrations.
\newblock {\em J. Differential Geom.}, 54(2):367--438, 2000.

\end{thebibliography}
\bibliographystyle{alpha}

\end{document}